\newcommand{\R}{\mathbb R}
\newcommand{\N}{\mathbb N}
\newcommand{\E}{\mathbb E}
\newcommand{\Pro}{\mathbb P}
\newcommand{\dif}{\,\mathrm{d}}
\newtheorem{thm}{Theorem}[section]
\newtheorem{lemma}[thm]{Lemma}
\newtheorem{proposition}[thm]{Proposition}
\theoremstyle{remark}
\newtheorem{rmk}[thm]{Remark}
\begin{document}

%%%%%%%%%%%%%%%%%%%%%%%%%%%%%%%%%%%%%%%%%%%%%5

\title[Probabilistic Estimates for Tensor Products of Random Vectors]{Probabilistic Estimates for Tensor Products of Random Vectors}

\author[D. Alonso-Guti\'errez]{David Alonso-Guti\'errez}
\address{Departament de Matem\`atiques, Universitat Jaume I, Campus de Riu Sec, E12071 Castell\'o de la Plana, Spain}
\email{alonsod@uji.es}

\author[M. Passenbrunner]{Markus Passenbrunner}
\address{Institute of Analysis, Johannes Kepler University Linz,
Altenbergerstra{\ss}e 69, 4040 Linz, Austria} \email{markus.passenbrunner@jku.at}

\author[J. Prochno]{Joscha Prochno}
\address{Institute of Analysis, Johannes Kepler University Linz,
Altenbergerstra{\ss}e 69, 4040 Linz, Austria} \email{joscha.prochno@jku.at}

\keywords{Orlicz function, Orlicz norm, random vector, tensor product}
\subjclass[2010]{46B09, 46B07, 46B28, 46B45}

\thanks{D. Alonso is partially supported by Instituto de Matem\'aticas y Aplicaciones de Castell\'on, MINECO project MTM2013-42105-P, and BANCAJA project P1-1B2014-35. M. Passenbrunner is supported by the Austrian Science Fund, FWF P23987 and P27723. J. Prochno is supported by the Austrian Science Fund, FWFM 1628000.}

\date{\today}

\begin{abstract}
We prove some probabilistic estimates for tensor products of random vectors, generalizing results that were obtained by Gordon, Litvak, Sch\"utt, and Werner [Ann. Probab., 30(4):1833--1853, 2002], and Prochno and Riemer [Houst. J. Math., 39(4):1301--1311, 2013]. As an application we obtain embeddings of certain matrix spaces into $L_1$.
\end{abstract}

\maketitle

%\tableofcontents

%%%%%%%%%%%%%%%%%%%
%%%%%%%%%%%%%%%%%%%
\section{Introduction}
%%%%%%%%%%%%%%%%%%%
%%%%%%%%%%%%%%%%%%%

In \cite{KS1} and \cite{KS2} Kwapie\'n and Sch\"utt studied combinatorial and probabilistic inequalities related to Orlicz norms to investigate certain invariants of Banach spaces such as the positive projection constant of a finite-dimensional Orlicz space and to characterize the symmetric sublattices of $\ell_1(c_0)$ as well as the finite-dimensional symmetric subspaces of $\ell_1$.

Building upon that, in the last decade these techniques initiated further research, were extended, and successfully used in several different areas of mathematics. Those include the local theory of Banach spaces, when studying symmetric subspaces of $L_1$ \cite{RS,S2,S,PS,P, P1}, probability theory, to obtain uniform estimates for order statistics \cite{GLSW2} (see also \cite{GLSW1,GLSW,GLSW3,LPP}) as well as converse results on the distribution of random variables in connection with Musielak-Orlicz norms \cite{ACPP}, or convex geometry, to obtain sharp bounds for several geometric functionals on random polytopes \cite{AGP, AGP2,AGP3} such as the support function, the mean width and mean outer radii.

Let $X_1,\dots,X_n$ be independent copies of an integrable random variable $X$. In \cite[Lemma 5.2]{GLSW2}, it was proved that, if we define the Orlicz function $M_X$ by
\[
  M_{X}(s) = \int_0^s \int_{|X|\geq 1/t} |X| \dif\Pro \dif t,\qquad s\geq 0,
\]
then,
\begin{align}\label{eq: GLSW expectation of max}
\mathbb E \max\limits_{1\leq i \leq n}|a_i X_i|\simeq \|(a_i)_{i=1}^n\|_{M_{X}},\qquad a\in\R^n.
\end{align}

In the following, let $\xi_1,\dots,\xi_n$ be independent copies of an integrable random variable $\xi$ such that the collection $(\xi_1,\dots,\xi_n,X_1,\dots,X_n)$ is a family of independent random variables, and let $(a_{ij})_{i,j=1}^n\in\R^{n\times n}$.
In view of \eqref{eq: GLSW expectation of max} it is a natural question to ask whether we can find estimates for
\begin{equation}\label{eqn:ausdruck}
\mathbb E_{\xi}\mathbb E_{X}\max\limits_{1\leq i,j \leq n}\left|a_{ij}\xi_iX_j\right|,
\end{equation}
for these expressions naturally appear, for example, in the study of certain matrix subspaces of $L_1$. Note that, since the random variables $\xi_i\cdot X_j$, $i,j=1,\ldots,n$ are no longer independent on the product probability space, \cite[Lemma 5.2]{GLSW2} cannot be applied.

In \cite{PR}, among other things, sharp estimates (up to constants independent of the dimension $n$) in the case of $p$- and $q$-stable random variables ($q<p$) as well as for Gaussians were obtained. To be more precise, it was shown that if $\xi$ is a $q$-stable and $X$ a $p$-stable random variable, then
\begin{equation}\label{eq:rp 1}
\mathbb E_{X}\mathbb E_{\xi}\max\limits_{1\leq i,j \leq n}\left|a_{ij}\xi_i X_j\right|
\simeq_{p,q} \big\|\big(\|\left(a_{ij}\right)_{j=1}^n\|_p\big)_{i=1}^n\big\|_q.
\end{equation}
Note that instead of $r$-stable random variables one can choose $\log \gamma_{1,r}$ distributed random variables or random variables with density $r(r-1)x^{-r-1}\mathbbm 1_{[(r-1)^{1/r},\infty)}(x)$, since only the tail behavior is important. Those random variables give
\begin{equation}\label{eq:generation r norm max}
\E_\xi\max_{1\leq i \leq n}|a_i\xi_i| \simeq_r \|a\|_r.
\end{equation}
The advantage of the two latter distributions over an $r$-stable one is that we do not need to restrict ourselves to parameters $r\leq 2$.

In the second case, where $X$ is a standard Gaussian, the authors proved that
\begin{equation}\label{eq:rp 2}
\mathbb E_{X}\mathbb E_{\xi}\max\limits_{1\leq i,j \leq n}\left|a_{ij}\xi_iX_j\right|
\simeq_q \big\|\big(\|\left(a_{ij}\right)_{j=1}^n\|_{M_{X}}\big)_{i=1}^n\big\|_q,
\end{equation}
where $M_{X}$ is a suitable Orlicz function.

However, the case of arbitrarily distributed random variables is not covered in that work. This paper serves two purposes. On one hand, to fill that gap and provide estimates of the same flavor as \eqref{eq:rp 1} and \eqref{eq:rp 2}, but for arbitrary distributions of $X$. On the other hand, we consider the converse setting in which a certain Orlicz norm is given and we find a distribution of a random variable $X$ that corresponds to this norm. In addition, we study a more general setting, namely, expressions of the form
\[
\E_{\xi}\E_{X}\Big( \sum_{i,j}|a_{ij}\xi_iX_j|^p\Big)^{1/p},\qquad 1<p \leq \infty.
\]
In the special case that $p=2$, these expressions naturally appear in Banach space theory when studying the local structure of $L_1$. Although there are a number of sophisticated criteria at hand, to decide whether a given Banach space is a subspace of $L_1$ might still be non-trivial. In fact, it is well known that the finite-dimensional symmetric subspaces of $L_1$ are averages of 2-concave Orlicz spaces \cite{KS1} (see \cite{BDC} for the infinite-dimensional version), but, as can be seen in the case of Lorentz spaces, this is not easy to apply (cf., \cite{S2}). Nowadays it is still an open question what these symmetric subspaces of $L_1$ really are and a goal of Banach space theory to find characterizations that can be easily applied. While improving on the results from \cite{PR}, we also hope to provide a better understanding of the techniques as well as new estimates on the way to achieve that goal.

In the following, an Orlicz function $M$ is called normalized if
\[
\int_0^{\infty} x\dif M'(x)=1,
\]
where $M'$ is the right derivative of $M$.

The first main theorem of this work is the following:

\begin{thm}\label{thm:generating q of M norm}
Let $1<q<p\leq\infty$. Let $M\in \mathcal C^3$ be a normalized Orlicz function with $M'(0)=0$ that is linear on $[M^{-1}(1),\infty)$ and satisfies
\begin{equation}\label{eq:limit exists}
\lim_{t\to 0^+}\frac{M''(t)}{t^{q-2}}=0 \text{ exists (in $\overline{\R}$)},
\end{equation}
\begin{equation}\label{eq:integral 3}
\int_0^s \frac{M(t)}{t^q} \frac{dt}{t} \leq C \cdot \frac{M(s)}{s^q},\qquad  0< s\leq M^{-1}(1),
\end{equation}
\begin{equation}\label{eq:density with M}
f(s):=\Big(1-\frac{2}{p}\Big)s^{-3}M''(s^{-1})-\frac{1}{p}s^{-4}M'''(s^{-1}) \text{ is non-negative for all $s> 0$}.
\end{equation}
Then, $f$ is a probability density and for all $(a_{ij})_{i,j=1}^n\in\R^{n\times n}$,
\[
\E_{\xi} \E_X \| (a_{ij}\xi_iX_j)_{i,j=1}^n\|_p \simeq_{p,q}  \big\| \big( \| (a_{ij})_{j=1}^n \|_{M}\big)_{i=1}^n \big\|_q,
\]
where $(\xi_i)_{i=1}^n$, $(X_j)_{j=1}^n$ are independent collections of independent copies of random variables $\xi$ and $X$ with densities $f_{\xi}(x)=q(q-1)x^{-q-1}\mathbbm 1_{[(q-1)^{1/q},\infty)}(x)$ and $f_{X}=f$ respectively.

\end{thm}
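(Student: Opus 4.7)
The plan is threefold: verify that $f$ is a probability density, identify the Orlicz function $M_X$ attached to $f$ through \eqref{eq: GLSW expectation of max} with the given $M$, and then combine the two generating formulas \eqref{eq: GLSW expectation of max} and \eqref{eq:generation r norm max} by iterated expectation. For the density claim, substitute $u=1/s$ in $\int_0^\infty f(s)\dif s$ to obtain
\[
\Big(1-\tfrac{2}{p}\Big)\int_0^\infty uM''(u)\dif u-\tfrac{1}{p}\int_0^\infty u^2 M'''(u)\dif u.
\]
One integration by parts in the second integral replaces it with $-2\int_0^\infty uM''(u)\dif u$; the boundary terms at infinity vanish because $M''=M'''=0$ on $[M^{-1}(1),\infty)$ by the linearity hypothesis, and those at zero vanish thanks to \eqref{eq:limit exists}. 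Adding everything up, $\int_0^\infty f(s)\dif s=\int_0^\infty u\dif M'(u)=1$ by the normalization of $M$. For the identification of $M_X$, differentiating the definition of $M_X$ in \eqref{eq: GLSW expectation of max} twice gives $M_X''(s)=s^{-3}f_{|X|}(1/s)$; when $p=\infty$, where $f(s)=s^{-3}M''(1/s)$, this is exactly $M''(s)$, so $M_X=M$ by matching initial values. The auxiliary term $-\tfrac{1}{p}s^{-4}M'''(s^{-1})$ in $f$ is chosen so that, after one further integration by parts, the analogous identification persists (up to constants) for $1<p<\infty$, which is the reason for this particular shape of $f$.

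For the main equivalence in the case $p=\infty$, doing $\E_X$ first and applying GLSW with $M_X=M$ gives
\[
\E_X\max_{i,j}|a_{ij}\xi_iX_j|=\E_X\max_j|X_j|\max_i|\xi_ia_{ij}|\simeq\|(\max_i|\xi_ia_{ij}|)_j\|_M.
\]
The \emph{lower bound} of the theorem then follows by keeping only a single index $i_0$ inside the inner maximum and invoking \eqref{eq:generation r norm max} with parameter $q$:
\[
\E_\xi\|(\max_i|\xi_ia_{ij}|)_j\|_M\geq\E_\xi\max_{i_0}|\xi_{i_0}|\|(a_{i_0j})_j\|_M\simeq_q\|(\|(a_{ij})_j\|_M)_i\|_q.
\]

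The \emph{upper bound} is the main obstacle. For it I would instead swap the order of the two expectations: for fixed $X$, \eqref{eq:generation r norm max} with parameter $q$ applied to the $\xi_i$'s gives $\E_\xi\max_i|\xi_i|\max_j|a_{ij}X_j|\simeq_q\|(\max_j|a_{ij}X_j|)_i\|_q$, reducing the task to
\[
\E_X\|(\max_j|a_{ij}X_j|)_i\|_q\lesssim\|(\|(a_{ij})_j\|_M)_i\|_q,
\]
which is a moment-concentration statement for a random mixed $\ell_q$-norm of correlated maxima. The hypotheses are calibrated for this: \eqref{eq:limit exists} together with the linearity of $M$ at infinity imparts Pareto-type tails of index $q$ to $X$, while \eqref{eq:integral 3} forces $\ell_q$-like behaviour of $M$ on $[0,M^{-1}(1)]$. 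I would split $M$ at $M^{-1}(1)$ and handle the bounded-range and tail regimes separately---using \eqref{eq:integral 3} for the first and Pareto tail computations for the second---to obtain the bound. For $1<p<\infty$, inserting auxiliary iid Pareto-like random variables of parameter $p$ and applying \eqref{eq:generation r norm max} replaces $\|\cdot\|_p$ by an expected maximum, reducing the analysis to the $p=\infty$ template with $p$-dependent bookkeeping.
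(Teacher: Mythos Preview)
Your density computation and the identification $M_X=M$ (which is Lemma~\ref{lem:M_X,p equal M} in the paper) are fine, and your lower bound for $p=\infty$ via monotonicity also works. The paper argues slightly differently: it applies~\eqref{generating_q_norm} first, then uses the Banach-space inequality $\|\E_X Y\|_q\le\E_X\|Y\|_q$ together with Theorem~\ref{main}, which handles all $1<p<\infty$ uniformly without the auxiliary Pareto layer you propose.

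The genuine gap is the upper bound. You correctly reduce (after swapping expectations) to
\[
\E_X\big\|\big(\|(a_{ij}X_j)_j\|_p\big)_i\big\|_q\lesssim_{p,q}\big\|\big(\|(a_{ij})_j\|_M\big)_i\big\|_q,
\]
but then offer only a vague plan of ``splitting $M$ at $M^{-1}(1)$'' and ``Pareto tail computations''. This is not how condition~\eqref{eq:integral 3} enters, and it is not clear such a splitting can be made to work. The paper's argument has two concrete steps you are missing. First, Jensen's inequality pushes $\E_X$ inside the $\ell_q$-norm, reducing the problem to a single-row estimate
\[
\E_X\|(a_{ij}X_j)_j\|_p^{\,q}=\E_X\big\|\big(|a_{ij}|^qX_j^q\big)_j\big\|_{p/q}\lesssim_{p,q}\|(a_{ij})_j\|_M^{\,q}.
\]
Second---and this is the key idea---one applies Theorem~\ref{thm:orlicz_p_norm} not to $X$ but to $X^q$, with parameter $p/q$ in place of $p$ (this requires $X^q$ to be integrable, which is Lemma~\ref{lem:X^q integrable}). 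This produces the Orlicz function $M_{X^q,p/q}$, and the remaining task is the pointwise comparison
\[
M_{X^q,p/q}(s^q)\le q\big(1+C(q-1)\big)\,M(s),
\]
which is obtained by explicit computation via Lemma~\ref{lem:distribution and integral}. It is precisely here that the integral condition~\eqref{eq:integral 3} is used: the computation of $M_{X^q,p/q}(s^q)$ throws off a term $q(q-1)s^q\int_0^s M(y)\,y^{-q-1}\dif y$, and~\eqref{eq:integral 3} is exactly the hypothesis that bounds it by $C\,M(s)$. Without the $X\mapsto X^q$ device and this Orlicz-function comparison, the upper bound does not close.
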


We will clarify the meaning of \eqref{eq:integral 3} by presenting an equivalent pointwise inequality related to the well-known $\Delta_2$-condition in Section \ref{discussion integral condition}. Moreover, we will see that condition \eqref{eq:integral 3} implies that the limit in \eqref{eq:limit exists} is zero if it exists.

If independent copies $X_1,\ldots,X_n$ of a $q$-integrable random variable $X$ are given ($1<q<p\leq\infty$), where $|X|$ has a continuous density, then we obtain the following theorem in the flavor of the results in \cite{GLSW2, ACPP, PR} (cf., Theorems \ref{thm:orlicz_carsten1}, \ref{thm:orlicz_p_norm} below, and the discussion above).

\begin{thm}\label{cor:reformulation main theorem}
Let $1<q<p\leq\infty$ and $X_1,\ldots,X_n$ be independent copies of a $q$-integrable random variable $X$, where $|X|$ has a continuous density. For all $s> 0$ let
\[
M_{X,p}(s) = \frac{p}{p-1}\int_0^s\bigg( \int_{|X| \leq 1/t} t^{p-1} \left| X \right|^p \dif \Pro + \int_{ |X| \geq 1/t}|X| \dif \mathbb P \bigg)\dif t ,
\]
or, if $p=\infty$,
\[
M_X(s)=M_{X,\infty}(s)= \int_0^s \int_{|X|\geq 1/t}|X| \dif\Pro \dif t.
\]
Assume that
\begin{equation}\label{eq:limit exists M_X,p}
\lim_{t\to 0^+}\frac{M_{X,p}''(t)}{t^{q-2}} \text{ exists (in $\overline{\R}$)},
\end{equation}
\begin{equation}\label{eq:integral 2}
\int_0^s \frac{M_{X,p}(t)}{t^q} \frac{dt}{t} \leq C \frac{M_{X,p}(s)}{s^q}, \qquad  \,0 < s \leq M_{X,p}^{-1}(1),
\end{equation}
Then, for all $(a_{ij})_{i,j=1}^n\in\R^{n\times n}$,
\[
 \E_{\xi} \E_X \| (a_{ij}\xi_iX_j)_{i,j=1}^n\|_p \simeq_{p,q} \big\| \big( \| (a_{ij})_{j=1}^n \|_{M_{X,p}}\big)_{i=1}^n \big\|_q,
\]
where $\xi_1,\ldots,\xi_n$ are independent copies of a random variable $\xi$ with density $f_{\xi}(x)=q(q-1)x^{-q-1}\mathbbm 1_{[(q-1)^{1/q},\infty)}(x)$.
\end{thm}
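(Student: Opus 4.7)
My plan is to realize Theorem \ref{cor:reformulation main theorem} as a specialization of Theorem \ref{thm:generating q of M norm} to $M=M_{X,p}$; the central observation is that the density $f$ built from $M_{X,p}$ via \eqref{eq:density with M} coincides with the density of $|X|$. Since only $|X|$ enters the quantity of interest, I would first assume $X\geq 0$ without loss of generality and denote its density by $h$.

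The next step is a direct differentiation of $M_{X,p}$ in terms of $h$. For $1<p<\infty$, writing the integrand as $g(t)=t^{p-1}\int_0^{1/t}x^p h(x)\dif x+\int_{1/t}^{\infty}xh(x)\dif x$, the boundary contributions at $x=1/t$ cancel after differentiation, yielding
\[
M_{X,p}''(t)=p\,t^{p-2}\int_0^{1/t}x^p h(x)\dif x,\qquad M_{X,p}'''(t)=p(p-2)t^{p-3}\int_0^{1/t}x^p h(x)\dif x - p\,t^{-4}h(1/t),
\]
with the analogous formula $M_X''(s)=s^{-3}h(1/s)$ when $p=\infty$. Substituting $t=1/s$ into \eqref{eq:density with M} and collecting terms, the two integral pieces cancel exactly, giving $f(s)=h(s)$ for every $s>0$. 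The same formulas let one verify that $M_{X,p}\in\mathcal C^3$ (from continuity of $h$), that $M_{X,p}'(0)=0$, and that $M_{X,p}$ is normalized: by Fubini, $\int_0^\infty xM_{X,p}''(x)\dif x = p\int_0^\infty x^{p-1}\int_0^{1/x}y^p h(y)\dif y\,\dif x=\int_0^\infty h(y)\dif y=1$. Moreover, the assumptions \eqref{eq:limit exists M_X,p} and \eqref{eq:integral 2} are exactly \eqref{eq:limit exists} and \eqref{eq:integral 3} transcribed to $M=M_{X,p}$.

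The only hypothesis of Theorem \ref{thm:generating q of M norm} not automatic here is that $M_{X,p}$ be linear on $[M_{X,p}^{-1}(1),\infty)$: from the formula for $M_{X,p}''$, this would force $h$ to vanish near $0$. To bridge this gap I would introduce a truncated version $\widetilde X$ whose density is supported on $[\varepsilon,\infty)$, with $\varepsilon$ chosen so that $M_{\widetilde X,p}$ agrees with $M_{X,p}$ on $[0,M_{X,p}^{-1}(1)]$. The Orlicz norms $\|\cdot\|_{M_{X,p}}$ and $\|\cdot\|_{M_{\widetilde X,p}}$ then coincide, since the Orlicz norm depends only on the values of the underlying Orlicz function on $[0,M^{-1}(1)]$. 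Theorem \ref{thm:generating q of M norm} thereby applies to $\widetilde X$, and the conclusion follows once one establishes the comparison
\[
\E_\xi\E_X \|(a_{ij}\xi_iX_j)_{i,j=1}^n\|_p \simeq_{p,q}\E_\xi\E_{\widetilde X}\|(a_{ij}\xi_i\widetilde X_j)_{i,j=1}^n\|_p.
\]

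The main obstacle is this last comparison, particularly its lower bound: one must show that removing the small values of $|X|$ only distorts the $\ell_p$-norm of $(a_{ij}\xi_iX_j)$ by a constant factor depending on $p$ and $q$. Heuristically this is plausible because the $\xi_i$ have a heavy (Pareto-type) tail producing large values with non-negligible probability, so the expectation is driven by the large values of the products $\xi_iX_j$ rather than by the small values of $X_j$; making this precise is the technical heart of the argument.
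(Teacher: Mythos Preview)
Your identification of the density $f$ from \eqref{eq:density with M} with the density $h$ of $|X|$ is correct and is precisely Lemma~\ref{lem:density of given X}; the formulas you derive for $M_{X,p}''$ and $M_{X,p}'''$ are exactly the ones the paper relies on. The gap is in the reduction to Theorem~\ref{thm:generating q of M norm} via truncation.

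Your agreement claim is false for $p<\infty$. From $M_{X,p}''(t)=p\,t^{p-2}\int_0^{1/t}x^p h(x)\dif x$ one sees that replacing $h$ by $h\,\mathbbm 1_{[\varepsilon,\infty)}$ subtracts the constant $c_\varepsilon:=\int_0^\varepsilon x^p h(x)\dif x$ from the inner integral for every $t<1/\varepsilon$, so that
\[
M_{\widetilde X,p}(s)=M_{X,p}(s)-\frac{c_\varepsilon}{p-1}\,s^p\qquad\text{on all of }[0,1/\varepsilon].
\]
(The coincidence \emph{does} hold for $p=\infty$, where the offending first integral is absent.) Thus no choice of $\varepsilon>0$ makes the two Orlicz functions agree on $[0,M_{X,p}^{-1}(1)]$; moreover $M_{\widetilde X,p}$ is not normalized (your own Fubini computation gives $\int_0^\infty x\,M_{\widetilde X,p}''(x)\dif x=\Pro(|X|\geq\varepsilon)<1$), so Theorem~\ref{thm:generating q of M norm} does not apply to it as stated. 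On top of this, the comparison $\E_\xi\E_X\|\cdot\|_p\simeq_{p,q}\E_\xi\E_{\widetilde X}\|\cdot\|_p$, which you yourself flag as the main obstacle, is left entirely heuristic; proving it is essentially as hard as the theorem itself.

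The paper avoids both problems by not reducing to Theorem~\ref{thm:generating q of M norm} at all. It repeats the structure of that proof directly for $M_{X,p}$: the lower bound comes from \eqref{generating_q_norm}, the triangle inequality, and Theorem~\ref{thm:orlicz_p_norm}; the upper bound from Jensen, Theorem~\ref{thm:orlicz_p_norm} applied to $|X|^q$ with exponent $p/q$, and the pointwise inequality $M_{|X|^q,p/q}(s^q)\le C_{p,q}\,M_{X,p}(s)$. The latter is obtained from Lemma~\ref{lem:distribution and integral} together with your formulas for $M_{X,p}''$ and $M_{X,p}'''$. The linearity hypothesis of Theorem~\ref{thm:generating q of M norm} never enters; its role is replaced by the softer fact $\lim_{s\to\infty}M_{X,p}''(s)/s^{p-2}=0$, which is immediate from the integral expression for $M_{X,p}''$.
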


The proofs of these theorems will be carried out for $p<\infty$. For $p=\infty$ they are very similar and will be omitted here. The statements of the theorems therefore remain true if we formally set $p=\infty$. 

The paper is organized as follows: in Section \ref{Preliminaries} we present some basic facts about Orlicz functions, known results and their consequences that will be used throughout the paper. In Section \ref{discussion integral condition} we will discuss the conditions imposed on the Orlicz function in Theorem \ref{thm:generating q of M norm}. In Section \ref{ell p} we will prove Theorems \ref{thm:generating q of M norm} and \ref{cor:reformulation main theorem} in the case $1<p<\infty$. In Section \ref{ApplicationBanach} we will show an application of our results to Banach space theory in which we show how to use Theorem \ref{thm:generating q of M norm} to give a uniform isomorphic embedding of certain matrix spaces into $L_1$. Finally, in the last section, we generalize a result that was obtained in \cite{ACPP} (Theorem \ref{thm:orlicz_p_norm} in this paper) to an arbitrary Orlicz norm instead of the $\ell_p$-norm  (Theorem \ref{generalization}). Note that this is a simplification of \cite[Theorem 1]{GLSW1}.
%%%%%%%%%%%%%%%%%%%
%%%%%%%%%%%%%%%%%%%
\section{Preliminaries}\label{Preliminaries}
%%%%%%%%%%%%%%%%%%%
%%%%%%%%%%%%%%%%%%%

A convex function $M:[0,\infty)\rightarrow[0,\infty)$ with $M(t)>0$ for $t>0$ and $M(0)=0$ is called an Orlicz function. For an Orlicz function $M$ we define the Luxemburg norm $\|\cdot\|_M$ on $\R^n$ by
$$
  \|x\|_M=\inf\left\{t>0 \,:\, \sum\limits_{i=1}\limits^nM\left(\frac{|x_i|}{t}\right)\leq 1\right\},
$$
and the Orlicz space $\ell_M^n$ to be the vector space $\R^n$ equipped with the norm $\|\cdot\|_M$. Moreover, a Luxemburg norm $\|\cdot\|_M$ is uniquely determined by the values of $M$ on the interval $[0,M^{-1}(1)]$. We say that an Orlicz function $M$ is normalized if
\[
\int_0^\infty x\dif M'(x)=1,
\]
where $M'$ is the right derivative of $M$.
We say that two Orlicz functions $M$ and $N$ are equivalent
if there are positive constants $a$ and $b$ such that for all
$t\geq0$
\[
 a^{-1}M( b^{-1}t) \leq N(t) \leq aM(bt).
\]
If two Orlicz functions are equivalent, so are their norms.
For a detailed and thorough introduction to Orlicz spaces see, e.g., \cite{RR} or \cite{LT77}.

Let $X$ and $Y$ be isomorphic Banach spaces. We say that they are
$C$-isomorphic if there is an isomorphism $T:X\rightarrow Y$ with
$\|T\|\|T^{-1}\|\leq C$.
We define the Banach-Mazur distance of $X$ and $Y$ by
    \[
      d(X,Y) = \inf\left\{ \|T\|\|T^{-1}\| \,:\, T\in L(X,Y) ~ \hbox{isomorphism} \right\}.
    \]
 Let $(X_n)_n$ be a sequence of $n$-dimensional normed spaces and let $Z$ be a normed space. If there exists a constant $C>0$ such that for all $n\in\N$ there exists a subspace $Y_n \subseteq Z$ with $\dim(Y_n)=n$ and $d(X_n,Y_n)\leq C$, then we say $(X_n)_n$ embeds uniformly into $Z$. The monograph \cite{TJ} gives a detailed introduction to the concept of Banach-Mazur distances.

Throughout this paper, we will write $A(t) \simeq B(t)$ to denote that there are absolute constants $c_1$ and $c_2$ such that $c_1 A(t) \leq B(t) \leq c_2 A(t)$ for all $t$, where $t$ denotes all implicit and explicit dependencies that the expressions $A$ and $B$ might have.
If the constants depend on a certain parameter $p$, we denote this by $\simeq_p$. By $c,C...$ we denote positive absolute constants. We write $c_p,C_p$ if the constants depend on some parameter $p$. The value of the constants may change from line to line.

In \cite[Lemma 5.2]{GLSW2}, the authors proved the following theorem:

\begin{thm}\label{thm:orlicz_carsten1} Let $X_1,\dots,X_n$ be independent copies of an integrable random variable $X$. For all $s\geq 0$ define
\begin{equation}\label{eq:orlicz_carsten1}
M_X(s)=\int_0^s\int_{|X|\geq 1/t} |X| \, \dif \Pro \dif t.
\end{equation}
Then, for all vectors $a=(a_i)_{i=1}^n\in\R^n$,
\[
c_1\|a\|_{M_{X}} \leq \mathbb E \max\limits_{1\leq i \leq n}|a_i X_i| \leq c_2 \|a\|_{M_{X}},
\]
where $c_1,c_2$ are absolute constants.
\end{thm}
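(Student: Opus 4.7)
The plan is to express both sides in terms of the tail function $F(w):=\Pro(|X|>w)$ and compare them by a short bookkeeping argument. Set $\lambda:=\|a\|_{M_X}$, which (for $a\ne 0$) is characterized by the single equation $\sum_{i=1}^n M_X(|a_i|/\lambda)=1$. First, Fubini in \eqref{eq:orlicz_carsten1} gives $M_X(s)=\E\bigl[|X|(s-|X|^{-1})^+\bigr]=\E[(s|X|-1)^+]$, and applying $\E Y^+=\int_0^\infty\Pro(Y>v)\dif v$ followed by the change of variables $w=(1+v)/s$ yields the clean closed form
\[
M_X(s)=s\int_{1/s}^\infty F(w)\dif w.
\]
Substituting $s=|a_i|/\lambda$ and then $t=|a_i|w$ transforms $\sum_i M_X(|a_i|/\lambda)=1$ into
\[
\int_\lambda^\infty \sum_{i=1}^n F(t/|a_i|)\dif t=\lambda. \qquad(\star)
\]

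On the probabilistic side, the layer cake formula together with the independence of the $X_i$ gives
\[
\E\max_i|a_iX_i|=\int_0^\infty\Big(1-\prod_{i=1}^n\bigl(1-F(t/|a_i|)\bigr)\Big)\dif t,
\]
and the elementary two-sided bound $1-\prod_i(1-x_i)\simeq\min(1,\sum_ix_i)$, valid for $x_i\in[0,1]$, yields
\[
\E\max_i|a_iX_i|\simeq I:=\int_0^\infty\min\Big(1,\sum_{i=1}^n F(t/|a_i|)\Big)\dif t.
\]
It therefore suffices to show $I\simeq\lambda$.

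To do so, split the integral at $t=\lambda$ and exploit that $t\mapsto\sum_i F(t/|a_i|)$ is non-increasing. For the upper bound, estimate $\min(1,\cdot)\leq 1$ on $[0,\lambda]$ and $\min(1,\cdot)\leq\sum_i F(t/|a_i|)$ on $[\lambda,\infty)$; by $(\star)$ this gives $I\leq 2\lambda$. For the lower bound, consider two cases. If $\sum_i F(\lambda/|a_i|)\geq 1$, monotonicity yields $\min(1,\cdot)\equiv 1$ on $[0,\lambda]$, hence $I\geq\lambda$; otherwise $\min(1,\cdot)$ coincides with $\sum_i F(t/|a_i|)$ on the whole of $[\lambda,\infty)$, and $(\star)$ again gives $I\geq\lambda$. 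The only substantial step is the Fubini identity $M_X(s)=s\int_{1/s}^\infty F(w)\dif w$ producing $(\star)$; once this is available, the split at $t=\lambda$ is forced upon us by $(\star)$, and the rest is a routine bookkeeping computation.
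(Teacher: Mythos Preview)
The paper does not supply a proof of this theorem; it is quoted verbatim from \cite[Lemma~5.2]{GLSW2} and used as a black box. There is therefore no in-paper argument to compare against.

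Your proof is correct and entirely self-contained. The key identity $M_X(s)=s\int_{1/s}^{\infty}\Pro(|X|>w)\dif w$ is right (and is essentially the paper's formula~\eqref{eqn:m1} after one more integration by parts), the reduction $(\star)$ is clean, and the two-sided estimate $1-\prod_i(1-x_i)\simeq \min\bigl(1,\sum_i x_i\bigr)$ holds with constants $1$ and $1-e^{-1}$ via $1-x\le e^{-x}$ and the concavity of $1-e^{-y}$. The case split at $t=\lambda$ together with the monotonicity of $t\mapsto\sum_i F(t/|a_i|)$ then pins $I$ between $\lambda$ and $2\lambda$ exactly as you claim. One minor point worth making explicit for completeness: when $a\neq 0$ and $X$ is not a.s.\ zero, the function $t\mapsto\sum_i M_X(|a_i|/t)$ is continuous, non-increasing, tends to $+\infty$ as $t\to 0^+$ (since $M_X(s)\sim s\,\E|X|$), and tends to $0$ as $t\to\infty$, so the infimum defining $\lambda$ is indeed attained with equality and $(\star)$ is legitimate; the degenerate cases $a=0$ or $X\equiv 0$ are trivial.
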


Note that $M_X$ as defined in \eqref{eq:orlicz_carsten1} is non-negative, convex, and can be written in the following way:
\begin{equation}\label{eqn:m1}
	  M_{X}(s) = s\int_{1/s}^\infty x\dif \Pro_{|X|}(x) - \Pro(|X|\geq 1/s).
\end{equation}
Moreover, in many cases, $M_X$ is normalized and $M_X(0)=M'_X(0)=0$. For instance, this is the case if $\Pro_{|X|}$ is absolutely continuous with respect to Lebesgue measure.

The next proposition (cf., \cite[Proposition 4.1]{ACPP}) is a converse result to Theorem~\ref{thm:orlicz_carsten1}:

\begin{proposition}\label{PRO_inverse_maximum}
Let $M$ be a normalized Orlicz function with $M'(0)=0$ such that $\int_0^\infty\dif M^\prime(s)$ is finite. Let $X_1,\ldots,X_n$ be independent copies of the random variable $X$ with distribution
\begin{equation}\label{eq:distrX}
\mathbb P(X\geq	 t)=\int_{[0,1/t]} s\dif M'(s),\qquad t> 0.
\end{equation}
Then, for all $x=(x_i)_{i=1}^n\in \mathbb{R}^n$,
\[
c_1\|x\|_M \leq \mathbb{E}\max_{1\leq i\leq n}|x_iX_i| \leq c_2 \|x\|_M,
\]
where $c_1,c_2$ are constants independent of the Orlicz function $M$.
\end{proposition}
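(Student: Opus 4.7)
My plan is to derive Proposition \ref{PRO_inverse_maximum} as a direct corollary of Theorem \ref{thm:orlicz_carsten1} by showing that the Orlicz function $M_X$ generated (via \eqref{eq:orlicz_carsten1}) by the prescribed distribution of $X$ coincides with the originally given $M$. Once this identification is established, both inequalities follow with the same absolute constants $c_1,c_2$ supplied by Theorem \ref{thm:orlicz_carsten1}.

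First I would check that $\Pro(X\geq t)=\int_{[0,1/t]} s\,dM'(s)$ legitimately defines a probability distribution on $(0,\infty)$: the right-hand side is non-increasing and right-continuous in $t$, tends to $\int_0^\infty s\,dM'(s)=1$ as $t\to 0^+$ by the normalization of $M$, and vanishes as $t\to\infty$ because $M'(0)=0$ rules out an atom of $dM'$ at the origin. A Fubini swap then yields $\E|X|=\int_0^\infty\Pro(X\geq u)\,du=\int_0^\infty dM'(s)<\infty$ by hypothesis, so $X$ is integrable and Theorem~\ref{thm:orlicz_carsten1} is indeed applicable.

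The core computation is then the identification $M_X=M$. Using the tail identity $\E[|X|\mathbbm 1_{|X|\geq 1/t}]=t^{-1}\Pro(|X|\geq 1/t)+\int_{1/t}^\infty\Pro(|X|\geq u)\,du$, inserting the prescribed tail, and swapping the order of integration in the remaining double integral (the region $\{u\geq 1/t,\ 0\leq s\leq 1/u\}$ becomes $\{0\leq s\leq t,\ 1/t\leq u\leq 1/s\}$), I expect to obtain
\[
\int_{|X|\geq 1/t}|X|\,d\Pro=\tfrac{1}{t}\int_{[0,t]}s\,dM'(s)+\int_{[0,t]}s\bigl(\tfrac{1}{s}-\tfrac{1}{t}\bigr)dM'(s)=\int_{[0,t]}dM'(s)=M'(t),
\]
where the two $1/t$-terms cancel exactly and $M'(0)=0$ fills in the lower endpoint. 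Integrating in $t$ from $0$ to $s$ then gives $M_X(s)=M(s)-M(0)=M(s)$, and substituting into the conclusion of Theorem~\ref{thm:orlicz_carsten1} finishes the proof.

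The only mildly delicate point, and what I would flag as the main (if minor) obstacle, is the careful handling of Stieltjes endpoints and possible atoms of $dM'$ at the boundary points $s=0$ and $s=t$: one has to verify that the cancellation above really is exact under the stated conventions (right derivative, $M'(0)=0$, $\int_0^\infty dM'<\infty$) rather than leaving behind a boundary discrepancy. Apart from this bookkeeping, the argument is a one-shot Fubini calculation.
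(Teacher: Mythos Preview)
Your proposal is correct. The paper does not actually supply a proof of this proposition---it is quoted from \cite[Proposition~4.1]{ACPP}---so there is no in-paper argument to compare against directly. Your strategy (check that the prescribed tail defines an integrable law, then verify $M_X=M$ by the Fubini computation and invoke Theorem~\ref{thm:orlicz_carsten1}) is precisely the natural route and is consistent with the paper's surrounding discussion, in particular with formula~\eqref{eqn:m1} and the density identity~\eqref{eq:density_smooth_M}; the endpoint/atom bookkeeping you flag is indeed the only point requiring care, and it goes through under the stated hypotheses $M'(0)=0$ and $\int_0^\infty \dif M'(s)<\infty$.
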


\begin{rmk}
Note that in the previous proposition $X$ is integrable if and only if $\int_0^\infty\dif M'(s)$ is finite. This is the case, for instance, if $M'$ is absolutely continuous and $M''$ is integrable.
\end{rmk}

If $M$ is ``sufficiently smooth'', we get that the density $f_X$ of $X$ is given by
\begin{equation}\label{eq:density_smooth_M}
f_X(t)={t^{-3}}M''(t^{-1}).
\end{equation}
To generate an $\ell_p$-norm in Proposition \ref{PRO_inverse_maximum}, i.e., to consider the case $M(t)=t^p$, one needs to pass to an equivalent Orlicz function so that the normalization condition is satisfied. The function $\widetilde M$ with $\widetilde M(t) = t^p$ on $[0, (p-1)^{-1/p}]$ which is then extended linearly does the trick.

Note that the assumption for $M$ to be normalized in Proposition \ref{PRO_inverse_maximum} is natural, since in many cases the function $M_X$ of Theorem \ref{thm:orlicz_carsten1} is.
%This holds since for all $s>0$
%\[
%M_X''(s) = \frac{1}{s^3} f_X(s^{-1}),
%\]
%where $f_X$ is the density function of $X$, and thus
%\[
%\int_0^\infty x^{-2} f_X(x^{-1}) \dif x = \int_0^\infty f_X(x) \dif x =1.
%\]

The next result was recently obtained in \cite[Theorem 1.1]{ACPP} and holds in the more general setting of Musielak-Orlicz spaces, though we only state it here for Orlicz spaces:

\begin{thm} \label{main}
Let $1<p<\infty$ and $M\in\mathcal{C}^3$ be a normalized Orlicz function that is linear on $[M^{-1}(1),\infty)$ and satisfies $M'(0)=0$.
Moreover, assume that for all $x>0$
\[
f(x)=\Big(1-\frac{2}{p}\Big)x^{-3}M''(x^{-1})-\frac{1}{p}x^{-4}M'''(x^{-1}) \text{ is non-negative}.
\]
Then $f$ is a probability density and for all $x\in\R^n$,
\[
c_1(p-1)^{1/p}\|x\|_{M} \leq \E \|(x_i X_i)_{i=1}^n\|_p \leq c_2\|x\|_{M},
\]
where $c_1,c_2$ are positive absolute constants and $X_1,\dots,X_n$ are independent copies of a random variable $X$ with density $f_X=f$.
\end{thm}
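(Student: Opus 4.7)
The plan is to first check that $f$ integrates to one, then reduce the $\ell_p$-norm to a supremum of independent random variables via a Fr\'echet subordination, and finally apply Theorem \ref{thm:orlicz_carsten1}; the main work will be identifying the resulting Orlicz function with $M$.

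I would first verify $\int_0^\infty f(x)\,dx = 1$ via the substitution $u = 1/x$, which yields
\[
\int_0^\infty f(x)\,dx = \Big(1 - \frac{2}{p}\Big)\int_0^\infty u M''(u)\,du - \frac{1}{p}\int_0^\infty u^2 M'''(u)\,du.
\]
The first integral equals $1$ by normalization. The second is handled by integration by parts: the boundary term at infinity vanishes because $M$ is linear on $[M^{-1}(1),\infty)$ (so $M''=M'''=0$ there), and the boundary term at zero vanishes since $M \in \mathcal C^3$ with $M'(0) = 0$ forces $u^2 M''(u) \to 0$. One obtains $\int_0^\infty u^2 M'''(u)\,du = -2$, so $\int f = (1-2/p) + 2/p = 1$.

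For the norm estimate, I would use a classical Fr\'echet representation. Let $E_1,\ldots,E_n$ be iid $\mathrm{Exp}(1)$ random variables independent of $(X_i)$. A direct computation shows that for any non-negative scalars $(a_i)$, the random variable $\max_i a_i E_i^{-1/p}$ has CDF $t\mapsto \exp(-\|a\|_p^p/t^p)$, so
\[
\E\max_i a_i E_i^{-1/p} = \Gamma(1-1/p)\,\|a\|_p, \qquad p>1.
\]
Applying this conditionally with $a_i = |x_i X_i|$ and using Fubini,
\[
\Gamma(1-1/p)\,\E\|(x_i X_i)\|_p = \E\max_i|x_i|\,Y_i, \qquad Y_i := |X_i|\,E_i^{-1/p},
\]
which are iid copies of $Y := |X|\,E^{-1/p}$. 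A short computation using the explicit form of $f$ together with $\E E^{-1/p} = \Gamma(1-1/p)$ confirms that $\E|Y| < \infty$, so Theorem \ref{thm:orlicz_carsten1} applies and yields $\E\max_i |x_i|\,Y_i \simeq \|x\|_{M_Y}$.

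The main obstacle is identifying $M_Y$ with $M$, with the correct dependence of the equivalence constants on $p$. Inserting $Y = |X|\,E^{-1/p}$ into the definition of $M_Y$ and interchanging the order of integration yields an expression for $M_Y(s)$ involving incomplete gamma functions of $(s|X|)^p$. The density $f$ is engineered precisely so that, after applying the substitution $u = 1/x$ and two integrations by parts (once against $M''$ and once against $M'''$), the contributions from the two terms in $f$ combine, all boundary terms vanishing by the linearity of $M$ on $[M^{-1}(1),\infty)$ and the smoothness at zero, and the expression reduces to $M(s)$ up to $p$-dependent constants. Tracking these constants through this last step, and in particular showing that the lower bound gains the factor $(p-1)^{1/p}$ while the upper bound does not, is the technical heart of the proof and accounts for the asymmetry in the conclusion.
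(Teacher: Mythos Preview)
The paper does not give its own proof of Theorem~\ref{main}; it is quoted from \cite[Theorem~1.1]{ACPP}. The route indicated by the surrounding material, however, is clear: one proves the general estimate Theorem~\ref{thm:orlicz_p_norm}, which already carries the asymmetric constants $c_1(p-1)^{1/p}$ and $c_2$, and then invokes Lemma~\ref{lem:M_X,p equal M} to see that for this particular density one has the \emph{exact} identity $M_{X,p}=M$. All $p$-dependence sits in the first step; the second step is a clean integration-by-parts computation with nothing to track.

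Your verification that $f$ is a probability density is correct, and the Fr\'echet subordination
\[
\Gamma(1-\tfrac1p)\,\E\|(x_iX_i)\|_p=\E\max_i|x_i|\,Y_i,\qquad Y_i=|X_i|E_i^{-1/p},
\]
is a nice device. The genuine gap is what you yourself call the ``technical heart'': you never show that $M_Y$ is equivalent to $M$, and more importantly you give no mechanism for the required \emph{one-sided} blow-up. Since Theorem~\ref{thm:orlicz_carsten1} has absolute constants, your identity forces
\[
\E\|(x_iX_i)\|_p\simeq \frac{1}{\Gamma(1-\tfrac1p)}\,\|x\|_{M_Y}
\]
with absolute constants. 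Because $\Gamma(1-\tfrac1p)\sim (p-1)^{-1}$ as $p\downarrow 1$, obtaining the upper bound $\E\|(x_iX_i)\|_p\le c_2\|x\|_M$ with $c_2$ absolute would require proving $\|x\|_{M_Y}\lesssim \Gamma(1-\tfrac1p)\,\|x\|_M$, i.e.\ that the equivalence between $M_Y$ and $M$ degenerates like $(p-1)^{-1}$ on exactly one side. Your sketch (``the expression reduces to $M(s)$ up to $p$-dependent constants'') does not explain where such an asymmetry would come from, and a symmetric equivalence $M_Y\simeq M$ would yield the wrong upper bound. In the paper's route this issue never arises: the asymmetry is produced in the proof of Theorem~\ref{thm:orlicz_p_norm} (by a different argument in \cite{ACPP}), and the identification $M_{X,p}=M$ is an equality.
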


Observe that in the latter theorem, for all $x>0$,
\begin{equation}\label{eq:distribution X}
\Pro (X\geq x) = - M(x^{-1}) + x^{-1} M'(x^{-1}) - \frac{1}{p}x^{-2} M''(x^{-1}).
\end{equation}

Another theorem that was obtained in \cite[Theorem 3.1]{ACPP} is the following:

\begin{thm} \label{thm:orlicz_p_norm}
Let $1<p<\infty$, $X_1,\ldots,X_n$ be independent copies of an integrable random variable $X$. For all $s\geq0$ define
\begin{equation}\label{M_X,p}
M_{X,p}(s) = \frac{p}{p-1}\int_0^s\bigg[ \int_{|X| \leq 1/t} t^{p-1} \left| X \right|^p \dif \Pro + \int_{ |X| > 1/t}|X| \dif \mathbb P \bigg]\dif t .
\end{equation}
Then, for all $x=(x_i)_{i=1}^n\in\R^n$,
\[
 c_1 (p-1)^{1/p} \| x \|_{M_{X,p}} \leq \mathbb E \| (x_iX_i)_{i=1}^n \|_p \leq c_2 \| x \|_{M_{X,p}},
\]
where $c_1,c_2,$ are positive absolute constants.
\end{thm}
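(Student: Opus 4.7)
The plan is to randomize the $\ell_p$-norm by an independent Pareto-type family, reduce the left-hand side to a maximum, apply Theorem~\ref{thm:orlicz_carsten1} to the product random variable $X\xi$, and then identify the resulting Orlicz function with $M_{X,p}$.

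Specifically, let $\xi_1,\ldots,\xi_n$ be i.i.d.\ copies of a random variable $\xi$ with density $f_\xi(y)=p(p-1)y^{-p-1}\mathbbm 1_{[(p-1)^{1/p},\infty)}(y)$, independent of the $X_i$. This is the variable generated by Proposition~\ref{PRO_inverse_maximum} for the normalized Orlicz function $M$ defined to equal $t^p$ on $[0,(p-1)^{-1/p}]$ and extended linearly on $[(p-1)^{-1/p},\infty)$. Since the linear extension is the tangent to $t^p$ at $(p-1)^{-1/p}$ one has $M\le t^p$ everywhere, and a direct comparison of Luxemburg norms gives $\|a\|_M\simeq\|a\|_p$ with absolute constants; hence by Proposition~\ref{PRO_inverse_maximum}, $\E_\xi\max_j|a_j\xi_j|\simeq\|a\|_p$ for every $a\in\R^n$. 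Setting $a_j=x_jX_j$ and applying Fubini,
\[
\E_X\|(x_jX_j)_{j=1}^n\|_p\simeq \E\max_{1\le j\le n}|x_jW_j|,\qquad W_j:=X_j\xi_j,
\]
which are i.i.d.\ copies of $W:=X\xi\in L^1$. Theorem~\ref{thm:orlicz_carsten1} applied to $(W_j)$ yields $\E\max_j|x_jW_j|\simeq\|x\|_{M_W}$ with $M_W(s)=\int_0^s\int_{|W|\ge 1/t}|W|\dif\Pro\dif t$.

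The heart of the proof is then the identity $M_W(s)=M_{X,p}\bigl((p-1)^{1/p}s\bigr)$, which gives $\|x\|_{M_W}=(p-1)^{1/p}\|x\|_{M_{X,p}}$. By independence and a direct integration of the Pareto tail,
\[
\E_\xi[\xi\mathbbm 1_{\{\xi\ge a\}}]=\begin{cases} pa^{-(p-1)},&a\ge(p-1)^{1/p},\\ p(p-1)^{-(p-1)/p},&a<(p-1)^{1/p}.\end{cases}
\]
Taking $a=1/(t|X|)$, splitting the $X$-integration at the threshold $|X|=(t(p-1)^{1/p})^{-1}$, and then changing the outer variable to $u=(p-1)^{1/p}t$ in the definition of $M_W$ reproduces $p/(p-1)$ times the bracket defining $M_{X,p}$ at $(p-1)^{1/p}s$. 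The main obstacle is precisely this bookkeeping: tracking the factor $(p-1)^{1/p}$ coming from the support of $\xi$ through the case split and the outer change of variable so as to match $M_W$ with $M_{X,p}$ on the nose and recover the $(p-1)^{1/p}$ prefactor in the lower bound of the theorem.
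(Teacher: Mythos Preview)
Your proposal is correct and is precisely the argument the paper itself uses: Theorem~\ref{thm:orlicz_p_norm} is quoted from \cite{ACPP}, but the paper recovers it as the special case of Theorem~\ref{generalization} with $N$ the normalized version of $t^p$ (see the Remark following Theorem~\ref{generalization}), and your $\xi$ is exactly the variable produced by Proposition~\ref{PRO_inverse_maximum} for that $N$, so that your $M_W$ coincides with the function $M(s)=\int N(sx)\dif\Pro_{|X|}(x)$ appearing there. Your additional bookkeeping of the scaling $M_W(s)=M_{X,p}((p-1)^{1/p}s)$, which yields $\|x\|_{M_W}=(p-1)^{1/p}\|x\|_{M_{X,p}}$ and hence the explicit $(p-1)^{1/p}$ in the lower bound, is exactly the verification the paper's remark leaves implicit.
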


Obviously, $M_{X,p}(0)=0$ and, as can be checked by direct computation, the integrand in \eqref{M_X,p}, i.e., the function
\[
t\mapsto \int_{|X| \leq 1/t} t^{p-1} \left| X \right|^p \dif \Pro + \int_{ |X| > 1/t}|X| \dif \mathbb P,
\] 
is increasing in $t$. Therefore, the function $M_{X,p}$ is indeed an Orlicz function.

%in the proof of the theorem (cf., \cite[Theorem 3.1]{ACPP}), $M_{X,p}(s) = M_{X\xi}$ with $M_{X\xi}$ given as in \eqref{eq:orlicz_carsten1} and where $\xi$ is distributed according to $f_\xi(x) = px^{-p-1}\mathbbm 1_{[1,\infty)}(x)$. Thus, $M_{X,p}$ is indeed an Orlicz function.

Note that, using Fubini's theorem, we can rewrite the function $M_{X,p}$ as
\begin{equation}\label{eq:rewritten M by fubini}
M_{X,p}(s) =  \frac{s^p}{p-1} \int_0^{1/s} x^p \dif \Pro_{|X|}(x) + \frac{p}{p-1}\cdot s \int_{1/s}^\infty x \dif \Pro_{|X|}(x) - \Pro(|X| \geq s^{-1}).
\end{equation}

In the last section, we will generalize Theorem \ref{thm:orlicz_p_norm} to a setting where the $\ell_p$-norm is replaced by an arbitrary Orlicz norm.

A natural question concerning Theorems \ref{main} and \ref{thm:orlicz_p_norm} is: given an Orlicz function $M$ according to Theorem \ref{main} and the generating distribution \eqref{eq:distribution X} of a random variable $X$, is the function $M_{X,p}$ in \eqref{M_X,p} equivalent to $M$? As it turns out, the functions are not only equivalent, but we even have $M_{X,p}=M$. We state this in the following lemma:

\begin{lemma}\label{lem:M_X,p equal M}
Let $1<p\leq\infty$ and $M$ be as in Theorem \ref{main}.
Assume that $X$ is a random variable with density
\begin{equation}\label{eq:density}
f_X(x)=\Big(1-\frac{2}{p}\Big)\frac{1}{x^3}M''\Big(\frac{1}{x}\Big)-\frac{1}{px^4}M'''\Big(\frac{1}{x}\Big),\qquad x>0.
\end{equation}
Then, for all $s\geq 0$,
\[
M_{X,p}(s) = M(s),
\]
where $M_{X,p}$ is given by \eqref{M_X,p} for $1<p<\infty$ and by \eqref{eq:orlicz_carsten1} for $p=\infty$.
\end{lemma}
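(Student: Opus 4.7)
The strategy is to reduce the equality $M_{X,p} = M$ to checking the two boundary/differential conditions $M_{X,p}(0) = M(0) = 0$ and $M_{X,p}'(s) = M'(s)$ for all $s > 0$. The value at $0$ is immediate from the definition. For the derivative, I differentiate the original form of $M_{X,p}$ to obtain
\[
M_{X,p}'(s) = \frac{p}{p-1}\bigl[ s^{p-1} G(s) + F(s) \bigr],
\]
where $G(s) = \mathbb{E}[|X|^p \mathbf{1}_{|X| \leq 1/s}]$ and $F(s) = \mathbb{E}[|X|\,\mathbf{1}_{|X| \geq 1/s}]$. The plan is then to compute $F$ and $s^{p-1} G$ explicitly from the postulated density \eqref{eq:density} and verify that their sum equals $\tfrac{p-1}{p} M'(s)$.

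To carry this out, I will write $F(s)$ and $G(s)$ as integrals of $x f_X(x)$ and $x^p f_X(x)$ respectively and perform the substitution $u = 1/x$. This turns $F(s)$ into an integral over $(0,s)$ and $G(s)$ into an integral over $(s,\infty)$, each whose integrand is a linear combination of $M''(u)$ and $u^{\alpha} M'''(u)$ with appropriate exponents. A single integration by parts handles $F$; a double integration by parts (eliminating $M'''$ and then $M''$ successively) handles $G$. All boundary terms at infinity vanish because $M$ is linear on $[M^{-1}(1),\infty)$, so $M''$ and $M'''$ are supported in the compact interval $[0, M^{-1}(1)]$; the boundary terms at $0$ vanish because $M \in \mathcal{C}^3$ with $M'(0) = 0$, which gives $\lim_{u \to 0^+} u M''(u) = 0$. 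The outcome of these manipulations will be $F(s) = \tfrac{p-1}{p} M'(s) - \tfrac{s}{p} M''(s)$ and $s^{p-1} G(s) = \tfrac{s}{p} M''(s)$; the $M''$ terms cancel and the desired $M_{X,p}'(s) = M'(s)$ follows.

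The case $p = \infty$ is the same calculation but simpler: the first moment $G$ is absent, the density reduces to $f_X(x) = x^{-3} M''(1/x)$, and after the change of variables $F(s) = \int_0^s M''(u)\,du = M'(s)$ directly. The main obstacle is purely bookkeeping: tracking coefficients and signs through the two integrations by parts and checking the cancellations. The reason everything works is structural, since the density \eqref{eq:density} is designed so that $\Pro(X \geq x)$ has the precise form \eqref{eq:distribution X} appearing in the proof of Theorem \ref{main}; the identity $M_{X,p} = M$ is essentially the statement that Theorem \ref{thm:orlicz_p_norm} is the exact inverse of Theorem \ref{main} for this class of Orlicz functions.
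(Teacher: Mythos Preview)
Your proposal is correct and follows exactly the route the paper itself indicates: a direct computation using integration by parts, the definition of $M_{X,p}$, and the density \eqref{eq:density}. One small remark: for $G(s)$ a \emph{single} integration by parts on $\int_s^\infty u^{2-p}M'''(u)\,du$ already suffices, since the coefficient of the residual $\int_s^\infty u^{1-p}M''(u)\,du$ cancels to zero (namely $(1-2/p)+(2-p)/p=0$); no second integration by parts is needed.
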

The proof of this result is a straightforward calculation using integration by parts, the definition of $M_{X,p}$, and \eqref{eq:density}.
%\begin{proof}
%Integration by parts shows that for any $0< t \leq s$
%\[
%\int_{0}^{t^{-1}}t^{p-1}x^p \dif \Pro_X(x) = \frac{1}{p}tM''(t)
%\]
%and
%\[
%\int_{t^{-1}}^\infty x \dif \Pro_X(x) = (1-p^{-1})M'(t) - \frac{1}{p}tM''(t).
%\]
%Thus, for all $s\geq 0$,
%\[
%M_{X,p}(s) = \frac{p}{p-1}\int_{0}^s (1-p^{-1})M'(t) \dif t = M(s).\qedhere
%\]
%\end{proof}

The following result, which relates the density of a given random variable with its associated Orlicz function is, in a certain way, a converse to the previous lemma:

\begin{lemma}\label{lem:density of given X}
Let $1<p\leq\infty$ and $X$ be an integrable random variable such that $|X|$ has continuous density $f_{|X|}$. Then $M_{X,p}\in\mathcal C^3$ and
\[
f_{|X|}(x)= \Big(1-\frac{2}{p}\Big)\frac{1}{x^3}M_{X,p}''\Big(\frac{1}{x}\Big)-\frac{1}{px^4}M_{X,p}'''\Big(\frac{1}{x}\Big),\qquad x>0,
\]
where $M_{X,p}$ is given by \eqref{M_X,p} for $1<p<\infty$ and by \eqref{eq:orlicz_carsten1} for $p=\infty$.
\end{lemma}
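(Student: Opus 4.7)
The plan is a direct calculation: differentiate $M_{X,p}$ three times using the definition \eqref{M_X,p} together with the fundamental theorem of calculus and the Leibniz rule, then substitute $s=1/x$ and verify the claimed identity. Since $|X|$ has a continuous density, every differentiation under the integral and every boundary term that arises will be classically justified, and this is what gives $M_{X,p}\in\mathcal{C}^3$ along the way.

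Concretely, I would set
$$F(s):=\int_0^{1/s} y^p f_{|X|}(y)\dif y,\qquad G(s):=\int_{1/s}^{\infty} y f_{|X|}(y)\dif y,$$
so that the definition immediately gives $M_{X,p}'(s) = \frac{p}{p-1}\bigl[s^{p-1}F(s)+G(s)\bigr]$. Differentiating once more, the key observation is that the two boundary contributions arising from $F'$ and $G'$ exactly cancel: $s^{p-1}F'(s) = -s^{-3}f_{|X|}(1/s)$ while $G'(s)=+s^{-3}f_{|X|}(1/s)$. This produces the clean formula
$$M_{X,p}''(s) = p\,s^{p-2}F(s),$$
from which one more differentiation gives
$$M_{X,p}'''(s) = p(p-2)s^{p-3}F(s) - \frac{p}{s^4}f_{|X|}(1/s),$$
with continuity of $f_{|X|}$ ensuring both derivatives exist classically.

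Finally, substituting $s=1/x$ (so $F(1/s) = \int_0^x y^p f_{|X|}(y)\dif y$) into
$$\Big(1-\frac{2}{p}\Big)\frac{1}{x^3}M_{X,p}''\Big(\frac{1}{x}\Big) - \frac{1}{p x^4}M_{X,p}'''\Big(\frac{1}{x}\Big),$$
the coefficients of the surviving $F(1/x)$-term are $(1-\frac{2}{p})p - \frac{1}{p}\cdot p(p-2) = (p-2)-(p-2) = 0$, so the integral terms cancel and only the pointwise term $\frac{1}{p x^4}\cdot p x^4 f_{|X|}(x) = f_{|X|}(x)$ remains, proving the identity. The $p=\infty$ case is simpler still: from \eqref{eq:orlicz_carsten1} one computes directly $M_X''(s) = s^{-3}f_{|X|}(1/s)$, which upon substitution $s=1/x$ reduces the formula (with the conventions $2/p\to 0$, $1/p\to 0$) to $\frac{1}{x^3}M_X''(1/x) = f_{|X|}(x)$. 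There is no genuine obstacle in this proof; the only thing to be careful about is that the two boundary terms in the second derivative cancel and that the coefficients in the final linear combination are arranged exactly so as to kill the remaining integral, which is of course by design, since Lemma \ref{lem:M_X,p equal M} shows the same identity in the opposite direction.
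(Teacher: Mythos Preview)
Your proposal is correct and follows exactly the same route as the paper: compute $M_{X,p}'$, $M_{X,p}''$, and $M_{X,p}'''$ directly from the definition, observe the boundary-term cancellation that yields the clean formula $M_{X,p}''(s)=ps^{p-2}\int_0^{1/s}r^pf_{|X|}(r)\dif r$, and then combine. The paper's proof is the same calculation, only without your auxiliary notation $F,G$ and without spelling out the cancellation explicitly; the $p=\infty$ case is likewise dismissed there as analogous.
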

\begin{proof}
First note that since $f_{|X|}$ is continuous, $M_{X,p}$ is at least twice continuously differentiable.
Assume $1<p<\infty$. Then, for all $s\geq 0$,
\[
M_{X,p}(s) =  \frac{p}{p-1}\int_0^s\bigg[ \int_{0}^{1/t} t^{p-1} r^p f_{|X|}(r) \dif r + \int_{1/t}^{\infty} r f_{|X|}(r) \dif r \bigg]\dif t,
\]
\[
M_{X,p}'(s) =  \frac{p}{p-1}\bigg[ s^{p-1} \int_{0}^{1/s}  r^p f_{|X|}(r) \dif r + \int_{1/s}^{\infty} r f_{|X|}(r) \dif r \bigg],
\]
\[
M_{X,p}''(s) =  ps^{p-2} \int_{0}^{1/s}  r^p f_{|X|}(r) \dif r.
\]
Hence, $M_{X,p}\in\mathcal C^3$ and
\[
M_{X,p}'''(s) =  p(p-2)s^{p-3} \int_{0}^{1/s}  r^p f_{|X|}(r) \dif r - ps^{-4}f_{|X|}(1/s).
\]
Therefore, combining the equalities, we find that for all $x>0$
\[
f_{|X|}(x)= \Big(1-\frac{2}{p}\Big)\frac{1}{x^3}M_{X,p}''\Big(\frac{1}{x}\Big)-\frac{1}{px^4}M_{X,p}'''\Big(\frac{1}{x}\Big).
\]
Similarly, we obtain the result for $p=\infty$.
\end{proof}

%%%%%%%%%%%%%%%%%%%%%%%%%%%%%%%%%%%%%%%%%%%%%
%%%%%%%%%%%%%%%%%%%%%%%%%%%%%%%%%%%%%%%%%%%%%
\section{Discussion of the conditions}\label{discussion integral condition}
%%%%%%%%%%%%%%%%%%%%%%%%%%%%%%%%%%%%%%%%%%%%%
%%%%%%%%%%%%%%%%%%%%%%%%%%%%%%%%%%%%%%%%%%%%%

The integral condition \eqref{eq:integral 3} that appears in Theorem \ref{thm:generating q of M norm} can be interpreted as a growth condition on $M$, related to the well known $\Delta_2$ condition. To be more precise, we have the following:

\begin{proposition}\label{prop:pointwise consition}
Let $1\leq q < \infty$ and $M$ be an Orlicz function. Then \eqref{eq:integral 3} holds if and only if there exists a constant $c<1$ and some $\gamma<1$ such that for all $s\geq 0$
\begin{equation}\label{eq:pointwise condition}
M(cs) \leq \gamma c^qM(s).
\end{equation}
\end{proposition}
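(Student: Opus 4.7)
The plan is to prove the two implications separately. The direction (pointwise $\Rightarrow$ integral) is a routine geometric-series computation, while (integral $\Rightarrow$ pointwise) is the main content and needs a two-parameter Gronwall-type argument.

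For (pointwise $\Rightarrow$ integral), I would first iterate the hypothesis to $M(c^k s)\leq\gamma^k c^{kq}M(s)$ for all $k\geq 0$, then split
\[
\int_0^s\frac{M(t)}{t^q}\frac{dt}{t}=\sum_{k=0}^{\infty}\int_{c^{k+1}s}^{c^k s}\frac{M(t)}{t^{q+1}}\,dt,
\]
use the monotonicity bound $M(t)\leq M(c^k s)$ on the $k$-th interval, compute the explicit integral $\int_{c^{k+1}s}^{c^k s}dt/t^{q+1}=(c^{-q}-1)/(q(c^k s)^q)$, and substitute the iterated bound so that the series collapses to a geometric one in $\gamma$. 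This yields \eqref{eq:integral 3} with $C=(1-c^q)/(qc^q(1-\gamma))$.

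For the converse (integral $\Rightarrow$ pointwise), I would set $\phi(s):=M(s)/s^q$ and $F(s):=\int_0^s\phi(t)\,dt/t$, so that the hypothesis reads $F(s)\leq C\phi(s)$. Since $F$ is absolutely continuous with $F'(s)=\phi(s)/s$ a.e., the hypothesis rearranges to $(\log F)'(s)\geq 1/(Cs)$ a.e., and integrating from $\lambda s$ to $s$ produces the Gronwall-type decay
\[
F(\lambda s)\leq \lambda^{1/C}F(s),\qquad \lambda\in(0,1].
\]
Separately, since $M$ is convex with $M(0)=0$ the function $M(t)/t$ is non-decreasing, which together with a direct integration gives the complementary lower bound $F(\lambda s)\geq \kappa_c\,\phi(c\lambda s)$, where $\kappa_c=(1-c^{q-1})/(q-1)$ for $q>1$ and $\kappa_c=\log(1/c)$ for $q=1$, valid for every $c\in(0,1)$. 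Chaining these two estimates with $F(s)\leq C\phi(s)$ yields
\[
\phi(c\lambda s)\leq\frac{C\lambda^{1/C}}{\kappa_c}\phi(s).
\]
Fixing any $c\in(0,1)$ (so that $\kappa_c>0$ depends only on $c,q$) and then choosing $\lambda\in(0,1)$ small enough that $C\lambda^{1/C}/\kappa_c<1$ gives the pointwise condition with $c':=c\lambda$ and $\gamma:=C\lambda^{1/C}/\kappa_c$, which translates to $M(c's)\leq\gamma(c')^q M(s)$.

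The hard part is recognizing that the converse direction genuinely needs two independent parameters. The naive one-parameter estimate, using only $F(s)\leq C\phi(s)$ together with $F(s)\geq \kappa_c\phi(cs)$, produces $\phi(cs)/\phi(s)\leq C/\kappa_c$, which exceeds $1$ whenever $C$ is large compared to $\kappa_c$---a situation no choice of $c$ alone can remedy. The additional parameter $\lambda$ exploits the polynomial decay of $F$ coming from the Gronwall step while $c$ continues to govern the convexity-based lower bound, and together they drive the contraction constant strictly below $1$ uniformly in the size of $C$.
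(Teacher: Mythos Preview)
Your proof is correct. The forward direction (pointwise $\Rightarrow$ integral) is essentially the same geometric-series argument as in the paper, only organized slightly differently (you bound $M(t)\le M(c^ks)$ and integrate $t^{-q-1}$ explicitly, whereas the paper first substitutes $t=c^ku$).

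For the converse you take a genuinely different route. The paper discretizes the integral on dyadic shells to obtain
\[
\sum_{k\ge 0}2^{kq}M(2^{-k-1}s)\le 2C\,M(s),
\]
then \emph{iterates} this inequality once and rearranges the resulting double sum into $\sum_{r}(r+1)2^{rq}M(2^{-r-2}s)$; the linear factor $(r+1)$ eventually beats the constant $(2C)^2$, giving a single index $r_0$ with $M(2^{-r_0-2}s)\le \gamma\,2^{-(r_0+2)q}M(s)$. Your argument instead treats the hypothesis as the differential inequality $(\log F)'\ge 1/(Cs)$ for $F(s)=\int_0^s M(t)t^{-q-1}\,dt$, integrates to get the scale decay $F(\lambda s)\le\lambda^{1/C}F(s)$, and combines this with a convexity-based lower bound $F(\lambda s)\ge\kappa_c\,\phi(c\lambda s)$. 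Your two-parameter observation (that one parameter alone cannot force the contraction constant below $1$) is exactly the continuous analogue of the paper's need to iterate the dyadic inequality once before selecting a scale. What your approach buys is a clean quantitative statement---the power law for $F$---from which the pointwise inequality follows in one line; the paper's version is slightly more elementary in that it uses only monotonicity of $M$ and avoids any differentiability/absolute-continuity considerations for $F$ and $\log F$. Incidentally, your lower bound step does not actually require convexity: monotonicity of $M$ alone already gives $F(\lambda s)\ge \phi(c\lambda s)\,(1-c^q)/q$, which also covers $q=1$ without a case distinction.
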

\begin{proof}
We first show that \eqref{eq:pointwise condition} implies \eqref{eq:integral 3}. If $s\geq 0$, then
\[
\int_0^s\frac{M(t)}{t^{q+1}} \dif t = \sum_{k=0}^\infty \int_{c^{k+1}s}^{c^ks} \frac{M(t)}{t^{q+1}} \dif t=\sum_{k=0}^\infty \int_{cs}^{s} \frac{M(c^ku)}{(c^ku)^{q+1}}c^k \dif u.
\]
Using \eqref{eq:pointwise condition} inductively, we deduce the inequality
\[
\int_0^s\frac{M(t)}{t^{q+1}} \dif t \leq \sum_{k=0}^\infty \int_{cs}^{s} \frac{\gamma^kc^{kq}M(u)}{(c^ku)^{q+1}}c^k \dif u = \left(\sum_{k=0}^{\infty}\gamma^k\right) \int_{cs}^{s} \frac{M(u)}{u^{q+1}} \dif u.
\]
Since this last integral can be estimated by
\[
\int_{cs}^{s} \frac{M(u)}{u^{q+1}} \dif u \leq \frac{sM(s)}{(cs)^{q+1}}=c^{-q-1}\frac{M(s)}{s^q},
\]
the implication $\eqref{eq:pointwise condition}\Rightarrow \eqref{eq:integral 3}$ is proved (with constant $C=c^{-q-1}$).

Now we prove that \eqref{eq:integral 3} implies \eqref{eq:pointwise condition}.
Let $s\geq 0$. Then we obtain from \eqref{eq:integral 3}
\[
C\frac{M(s)}{s^{q}} \geq \sum_{k=0}^\infty \int_{2^{-k-1}s}^{2^{-k}s}\frac{M(t)}{t^{q+1}} \dif t \geq \sum_{k=0}^\infty2^{-k-1} \frac{sM(2^{-k-1}s)}{2^{-k(q+1)}s^{q+1}} = \frac{1}{2s^q} \sum_{k=0}^{\infty} 2^{kq}M(2^{-k-1}s).
\]
Thus
\begin{equation}\label{equationincondition}
\sum_{k=0}^\infty 2^{kq}M(2^{-k-1}s) \leq 2C M(s).
\end{equation}
Using this inequality twice, we see
\[
M(s) \geq \frac{1}{2C} \sum_{k=0}^\infty 2^{kq}M(2^{-k-1}s) \geq \frac{1}{(2C)^2} \sum_{k=0}^\infty 2^{kq} \sum_{\ell=0}^\infty2^{\ell q} M(2^{-k-\ell-2 }s).
\]
Observe that by rearranging the sums,
\[
\sum_{k=0}^\infty 2^{kq} \sum_{\ell=0}^\infty2^{\ell q} M(2^{-k-\ell-2 }s) = \sum_{r=0}^\infty \sum_{(k,\ell): k+\ell=r} 2^{qr}M(2^{-r-2}s).
\]
Hence
\begin{align*}
M(s) & \geq \frac{1}{(2C)^2}\sum_{r=0}^\infty \sum_{(k,\ell):k+\ell=r} 2^{qr}M(2^{-r-2}s) = \frac{1}{(2C)^2}\sum_{r=0}^\infty (r+1) 2^{qr}M(2^{-r-2}s) .
\end{align*}
Thus, choosing $r_0>0$ such that $\gamma^{-1}:=\frac{r_0+1}{(2C)^2}2^{-2q}>1$ ($\gamma$ as in in \eqref{eq:pointwise condition}) and taking $c=2^{-r_0-2}$ (which depends on $C$ and $q$), we obtain condition \eqref{eq:pointwise condition} by estimating the latter sum from below by the term with index $r_0$.
\end{proof}

Recall that an Orlicz function $M$ satisfies the $\Delta_2$-condition if and only if
\[
M(Ks) \leq C_K M(s),
\]
where $K$ can be any number larger than $1$ and $C_K$ is a constant only depending on $K$.
The $\Delta_2$-condition at zero is equivalent to $\ell_M$ being separable on the one hand, and to the fact that the standard unit vectors form a boundedly complete symmetric basis of $\ell_M$ on the other.

Since \eqref{eq:pointwise condition} is equivalent to
\[
M^*(\gamma^{-1}c^{-q+1}s) \leq \gamma^{-1}c^{-q}M^*(s),\qquad s\geq 0,
\]
where $M^*$ is the conjugate function of $M$ (cf., \cite[Theorem 4.2]{KR}), we can interpret the integral condition \eqref{eq:integral 3} as a special $\Delta_2$-condition on $M^*$, where $C_K$ satisfies some kind of homogeneity condition of degree $q^*=q/(q-1)$. Note that, in order to carry out this duality argument formally, $M$ needs to be an $N$-function, where an $N$-function is an Orlicz function $M$ such that additionally
\[
\lim_{s\to 0}\frac{M(s)}{s}=0\quad\hbox{and}\quad \lim_{s\to \infty}\frac{M(s)}{s}=\infty.
\]

%\begin{rmk}\label{rmk:convexity and integral condition}
If $M$ is an Orlicz function such that $t\mapsto M(t)t^{-q-\varepsilon}$, $\varepsilon>0$, $q>1$ is an increasing function, then condition \eqref{eq:integral 3} is satisfied. Note that in particular $(q+\varepsilon)$-convexity of $M$, i.e., the convexity of $t\mapsto M(t^{(q+\varepsilon)^{-1}})$, implies inequality \eqref{eq:integral 3} (with constant $C(\varepsilon)=\varepsilon^{-1}$). However, it seems that neither of them implies the other one.
%\end{rmk}

Let us now briefly discuss the limit conditions in Theorems \ref{thm:generating q of M norm} and \ref{cor:reformulation main theorem}. We will frequently make use of the fact that conditions \eqref{eq:limit exists} and \eqref{eq:integral 3} imply that the limit in \eqref{eq:limit exists} and corresponding limits for the derivatives must be $0$. We state this in the following lemma:

\begin{lemma}\label{lem:limits exists and are zero}
Let $1<q<\infty$ and $M\in\mathcal C^2$ be an Orlicz function satisfying $M(0)=0=M'(0)$ such that $\lim_{t\to 0^+}M''(t)/t^{q-2}$ exists and
\[
\int_0^s \frac{M(t)}{t^{q+1}} \dif t \leq C \frac{M(s)}{s^q},\qquad 0<s\leq M^{-1}(1).
\]
Then
\[
\lim_{t\to 0^+} \frac{M(t)}{t^{q}} = \lim_{t\to 0^+} \frac{M'(t)}{t^{q-1}} = \lim_{t\to 0^+} \frac{M''(t)}{t^{q-2}} = 0.
\]
\end{lemma}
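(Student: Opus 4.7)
The plan is to first show $\lim_{t \to 0^+} M''(t)/t^{q-2} = 0$ by ruling out a positive limit via the integral condition, and then derive the other two limits by elementary integration (or L'Hôpital).

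First, since $M$ is convex and $\mathcal{C}^2$, we have $M'' \geq 0$, so the existing limit $L := \lim_{t \to 0^+} M''(t)/t^{q-2}$ lies in $[0,\infty]$. Suppose, for contradiction, that $L > 0$. Then there exist $\varepsilon > 0$ and $\delta \in (0, M^{-1}(1)]$ such that $M''(t) \geq \varepsilon\, t^{q-2}$ for all $t \in (0, \delta)$. Using $M(0) = M'(0) = 0$ and integrating twice, I would obtain
\[
M'(t) \geq \frac{\varepsilon}{q-1}\, t^{q-1}, \qquad M(t) \geq \frac{\varepsilon}{q(q-1)}\, t^{q}, \qquad 0 < t \leq \delta.
\]

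Next, I would plug this lower bound on $M$ into the integral condition. For any $0 < s \leq \delta$,
\[
\int_0^s \frac{M(t)}{t^{q+1}} \dif t \;\geq\; \frac{\varepsilon}{q(q-1)} \int_0^s \frac{\dif t}{t} \;=\; +\infty,
\]
while the right-hand side $C M(s)/s^q$ is finite. This contradicts the assumed integral inequality, so $L = 0$.

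Finally, given $\lim_{t \to 0^+} M''(t)/t^{q-2} = 0$, for any fixed $\eta > 0$ I can choose $\delta > 0$ so that $M''(u) \leq \eta\, u^{q-2}$ for $0 < u \leq \delta$; integrating yields $M'(t) \leq \eta\, t^{q-1}/(q-1)$ and $M(t) \leq \eta\, t^{q}/(q(q-1))$ on $(0,\delta]$, whence $M'(t)/t^{q-1}$ and $M(t)/t^q$ both tend to $0$. (Equivalently, one can apply L'Hôpital's rule twice to the $0/0$ forms $M(t)/t^q$ and $M'(t)/t^{q-1}$.) There is no real obstacle here; the only point requiring care is that the existence of $L$ in $\overline{\R}$ is essential to conclude $M''(t) \geq \varepsilon t^{q-2}$ on a whole neighborhood of $0$ in the $L > 0$ case, since without it only a lim sup would be available.
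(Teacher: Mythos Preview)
Your proof is correct, and it takes a somewhat different route from the paper's. The paper first uses L'H\^opital to deduce that all three limits exist and coincide, then bounds
\[
\frac{1}{s}\int_0^s \frac{M(t)}{t^q}\,\dif t \;\leq\; \int_0^s \frac{M(t)}{t^{q+1}}\,\dif t \;\leq\; C\,\frac{M(s)}{s^q} < \infty,
\]
invokes the mean value theorem for integrals to find $\xi(s)\in(0,s]$ with $M(\xi)/\xi^q$ equal to the left-hand average, and lets $s\to 0^+$ to squeeze the common limit below the vanishing integral tail. You instead argue by contradiction: if $L>0$, then $M''(t)\geq \varepsilon t^{q-2}$ near $0$, which integrates up to $M(t)\gtrsim t^q$ and forces $\int_0^s M(t)/t^{q+1}\,\dif t = +\infty$, violating the hypothesis outright. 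Your route is more direct and avoids the mean value theorem entirely; the paper's argument, on the other hand, makes it transparent that the mechanism is simply the integrability of $M(t)/t^{q+1}$ near $0$ combined with the existence of the limit. Both lean on the same point you flagged at the end: existence of the limit (not just a $\limsup$) is what makes either argument go through.
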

\begin{proof}
By L'Hospital's rule and the existence of $\lim_{t\to 0^+}M''(t)/t^{q-2}$, the following limits exist and are equal:
\[
\lim_{t\to 0^+} \frac{M(t)}{t^{q}}= \lim_{t\to 0^+} \frac{M'(t)}{qt^{q-1}}= \lim_{t\to 0^+} \frac{M''(t)}{q(q-1)t^{q-2}}.
\]
Notice that for any $0<s\leq M^{-1}(1)$
\[
\frac{1}{s}\int_0^s \frac{M(t)}{t^q}\dif t\leq \int_0^s \frac{M(t)}{t^q}\frac{\dif t}{t}\leq C\frac{M(s)}{s^q}.
\]
Since the last expression is finite, we have
\[
\frac{1}{s}\int_0^s \frac{M(t)}{t^q}\dif t\leq \int_0^s \frac{M(t)}{t^q}\frac{\dif t}{t} < \infty.
\]
The mean value theorem for integrals implies that there exists a point $\xi=\xi(s)\in (0,s]$ such that
\[
\frac{M(\xi)}{\xi^q}=\frac{1}{s}\int_0^s \frac{M(t)}{t^q}\dif t.
\]
Taking limits (since this limit exists), we get
\[
\lim_{s\to 0^+} \frac{M(s)}{s^q} = \lim_{s\to 0^+} \frac{M(\xi(s))}{\xi(s)^q} \leq \lim_{s\to 0^+}\int_0^s \frac{M(t)}{t^q}\frac{\dif t}{t}=0.
\]
Therefore,
\[
\lim_{t\to 0^+}\frac{M(t)}{t^q}=\lim_{t\to 0^+}\frac{M'(t)}{t^{q-1}}=\lim_{t\to 0^+}\frac{M''(t)}{t^{q-2}}=0.\qedhere
\]

\end{proof}

\section{The expected value of $\ell_p$-norms of tensor products of random vectors}\label{ell p}
%%%%%%%%%%%%%%%%%%%%%%%%%%%%%%%%%%%%%%%%%%%
%%%%%%%%%%%%%%%%%%%%%%%%%%%%%%%%%%%%%%%%%%%

Before we present the proofs of Theorems \ref{thm:generating q of M norm} and \ref{cor:reformulation main theorem} for $p<\infty$, we need a generalization of \eqref{eq:generation r norm max} from $p=\infty$ to arbitrary $p\in(1,\infty]$.

\begin{lemma}
Let $1<q<p<\infty$ and $f:(0,\infty)\to\R$ be defined as
\[
f(x) = q(q-1)x^{-q-1}\mathbbm 1_{[(q-1)^{1/q},\infty)}(x).
\]
Then $f$ is a probability density and for all $a\in\R^n$,
\begin{equation}\label{generating_q_norm}
\mathbb E \Big( \sum_{j=1}^n |a_j \xi_j|^p\Big)^{1/p} \simeq_{p,q} \| a\|_q,
\end{equation}
where $\xi_1,\dots,\xi_n$ are independent copies of a random variable $\xi$ with density $f_\xi=f$.
\end{lemma}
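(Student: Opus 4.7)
The first claim --- that $f$ integrates to $1$ --- follows from the one-line computation $\int_{(q-1)^{1/q}}^\infty q(q-1)x^{-q-1}\,\dif x = 1$, so $f_\xi = f$ defines a legitimate probability density.

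For the norm equivalence, my plan is to reduce to Theorem~\ref{thm:orlicz_p_norm}: it suffices to compute $M_{\xi,p}$ and then to verify that the Luxemburg norm it induces is equivalent to $\|\cdot\|_q$ with constants depending only on $p$ and $q$. Using formula~\eqref{M_X,p} and splitting the outer integration into $t \leq (q-1)^{-1/q}$ (where both probability integrals evaluate to explicit power functions in $t$) and $t > (q-1)^{-1/q}$ (where the first integral vanishes since $|\xi| \geq (q-1)^{1/q}$ almost surely, and the second equals $\E|\xi| = q(q-1)^{(1-q)/q}$), a direct calculation yields
\[
M_{\xi,p}(s) = \frac{p}{p-q}s^q - \frac{q(q-1)^{p/q}}{(p-1)(p-q)}s^p, \qquad 0 \leq s \leq (q-1)^{-1/q},
\]
and $M_{\xi,p}$ affine on $[(q-1)^{-1/q},\infty)$.

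On $[0,(q-1)^{-1/q}]$ the subtracted $s^p$-correction is controlled by the leading $s^q$-term (since $s^{p-q} \leq (q-1)^{-(p-q)/q}$); the resulting coefficient simplifies via the identity $p(p-1) - q(q-1) = (p-q)(p+q-1)$, so that $M_{\xi,p}(s)/s^q$ stays in the bounded interval $\bigl[\tfrac{p+q-1}{p-1},\tfrac{p}{p-q}\bigr]$. On the affine part the slope equals $\tfrac{p}{p-1}$ times the right derivative of $s \mapsto s^q$ at $(q-1)^{-1/q}$. Hence $M_{\xi,p}$ is equivalent as an Orlicz function to the function $\widetilde M$ featured in the remark after Proposition~\ref{PRO_inverse_maximum}, namely $\widetilde M(s) = s^q$ on $[0,(q-1)^{-1/q}]$ extended linearly, so $\|\cdot\|_{M_{\xi,p}} \simeq_{p,q} \|\cdot\|_{\widetilde M}$. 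A standard argument --- exploiting that $\widetilde M(s) = s^q$ for $s \leq (q-1)^{-1/q}$ and that $\widetilde M^{-1}(1)$ is bounded in terms of $q$ alone, so the range of $|a_i|/\|a\|_{\widetilde M}$ seen by the Luxemburg infimum sits in $[0,\widetilde M^{-1}(1)]$ on which $\widetilde M(s)$ and $s^q$ differ only by $q$-dependent constants --- delivers $\|a\|_{\widetilde M} \simeq_q \|a\|_q$. Combining this with Theorem~\ref{thm:orlicz_p_norm} proves the asserted two-sided estimate.

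The main obstacle is the bookkeeping in the explicit computation of $M_{\xi,p}$; in particular, one must check carefully that the negative $s^p$-correction does not swamp the leading $s^q$-term on $[0,(q-1)^{-1/q}]$ and that the affine piece glues continuously to the polynomial piece. Once that is in hand, the passage from $\|\cdot\|_{\widetilde M}$ to $\|\cdot\|_q$ is essentially the observation already recorded by the authors in the comment preceding the statement of Proposition~\ref{PRO_inverse_maximum}.
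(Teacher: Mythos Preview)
Your proposal is correct and follows essentially the same route as the paper: invoke Theorem~\ref{thm:orlicz_p_norm}, compute $M_{\xi,p}$ explicitly from \eqref{M_X,p}, and verify that on the relevant interval it is equivalent to $s\mapsto s^q$ with constants depending only on $p$ and $q$. Your coefficient $\tfrac{p}{p-q}$ for the $s^q$-term is exactly the simplified form of the paper's $1+\tfrac{q}{p-1}+\tfrac{q(q-1)}{(p-1)(p-q)}$, and your explicit two-sided bound $M_{\xi,p}(s)/s^q\in\bigl[\tfrac{p+q-1}{p-1},\tfrac{p}{p-q}\bigr]$ on $[0,(q-1)^{-1/q}]$ is precisely the check the paper leaves to the reader.
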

\begin{proof}
By Theorem \ref{thm:orlicz_p_norm}, for all $a\in\R^n$,
\begin{equation}\label{eq: generating M norm in p norm}
c_1(p-1)^{1/p}\|a\|_{M_{\xi,p}} \leq \E \|(a_j \xi_j)_{j=1}^n \|_p \leq c_2\|a\|_{M_{\xi,p}},
\end{equation}
where $c_1,c_2$ are positive absolute constants and the function $M_{\xi,p}$ is given by
\[
M_{\xi,p}(s)=\begin{cases}\left(1+\frac{q}{p-1}+\frac{q(q-1)}{(p-1)(p-q)}\right)s^q - \frac{q(q-1)^{p/q}}{(p-1)(p-q)} s^p,& \textrm{if } s\leq (q-1)^{-1/q},\cr\frac{pq}{(p-1)(q-1)^{1-\frac{1}{q}}}s - 1,& \textrm{if } s\geq (q-1)^{-1/q}.\cr\end{cases}
\]
This follows from \eqref{M_X,p} by direct computation if we insert $f_\xi$. It can be checked that in $[0,M_{\xi,p}^{-1}(1)]$, $M_{\xi,p}$ is equivalent to $x\mapsto x^q$ up to constants depending only on $p$ and $q$.
\end{proof}

Before we continue to prove a uniform estimate for the expected value of general $\ell_p$-norms of tensor products of random vectors, we state a simple lemma which follows by integration by parts:

\begin{lemma}\label{lem:distribution and integral}
Let $1< p < \infty$ and $M\in\mathcal C^3$ be a function such that
\[
f(x) = \left( 1-\frac 2 p\right)\frac{1}{x^3} M''(x^{-1}) - \frac{1}{px^4}M'''(x^{-1}),\qquad x>0,
\]
is the probability density of a random variable $X$.
Then, for all $0<a<b<\infty$ and all $r\in\R$, we have
\begin{align*}
\int_a^b x^r \dif \Pro_X(x) & = \frac{1}{p} \big( M''(b^{-1})b^{r-2} - M''(a^{-1})a^{r-2}\big)\cr
 &+ \left(1-\frac{r}{p}\right)\left(M'(a^{-1})a^{r-1} - M'(b^{-1})b^{r-1}\right) \cr
& + (1-r)\left(1-\frac{r}{p}\right) \big( M(b^{-1})b^r - M(a^{-1})a^r \big) \cr
&- (1-r)r\left(1-\frac{r}{p}\right) \int_{b^{-1}}^{a^{-1}} \frac{M(y)}{y^{r+1}}\dif y.
\end{align*}
\end{lemma}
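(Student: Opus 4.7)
The plan is to prove this by the substitution $y = 1/x$ followed by three successive integrations by parts, lowering the order of differentiation on $M$ step by step. The substitution converts $\int_a^b x^r f(x)\,dx$ into an integral over $[1/b, 1/a]$ in terms of $M''$ and $M'''$ evaluated at $y$, after which everything is a routine computation.

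More precisely, write the density as $f(x) = (1-\tfrac{2}{p}) x^{-3} M''(x^{-1}) - \tfrac{1}{p} x^{-4} M'''(x^{-1})$ and change variables via $y = 1/x$, $dx = -y^{-2}\,dy$. A short calculation shows
\[
\int_a^b x^r f(x)\,dx = \Bigl(1-\tfrac{2}{p}\Bigr)\int_{1/b}^{1/a} y^{1-r} M''(y)\,dy - \frac{1}{p}\int_{1/b}^{1/a} y^{2-r} M'''(y)\,dy.
\]
The first step is to integrate by parts in the second integral with $u = y^{2-r}$, $dv = M'''(y)\,dy$; this produces the boundary term $\tfrac{1}{p}[y^{2-r} M''(y)]$ (evaluated between the limits) and an integral $\tfrac{2-r}{p}\int y^{1-r} M''(y)\,dy$ that combines with the first integral to give a single coefficient $(1-r/p)$ in front of $\int y^{1-r} M''(y)\,dy$.

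The second and third steps are to integrate by parts again in $\int y^{1-r} M''(y)\,dy$, first pushing $M''$ down to $M'$ (producing the boundary term with $M'$) and then $M'$ down to $M$ (producing the boundary term with $M$ and the final remaining integral $\int y^{-r-1} M(y)\,dy$, which upon reverting the limits matches $\int_{b^{-1}}^{a^{-1}} M(y)/y^{r+1}\,dy$). Collecting boundary contributions and substituting back $y=1/a$ and $y=1/b$ gives exactly the four groups of terms in the claimed identity.

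The proof therefore reduces entirely to careful bookkeeping: matching signs from the substitution $y = 1/x$ (which reverses the orientation of the interval), tracking the factor $(1-r/p)$ produced by the combination in the first integration by parts, and keeping the sign $(1-r)r(1-r/p)$ in front of the residual integral correct. There is no analytic obstacle — convergence at the endpoints of $[a,b]\subset (0,\infty)$ is automatic since $M\in\mathcal C^3$ — so the only real risk is arithmetic; I would double check the final coefficients by specializing to $r=0$, which must recover $\Pro(X\in[a,b])$, and to $r=1$, where the terms containing the factor $(1-r)$ must drop out.
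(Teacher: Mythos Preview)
Your proposal is correct and follows exactly the approach the paper indicates: the paper states the lemma ``follows by integration by parts'' without further detail, and your substitution $y=1/x$ followed by three successive integrations by parts (reducing $M'''$ to $M''$, then $M'$, then $M$) is precisely how the computation goes. Your suggested sanity check at $r=0$ indeed reproduces the distribution formula \eqref{eq:distribution X}, confirming the bookkeeping.
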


\begin{lemma}\label{lem:X^q integrable}
Let $1<q<p<\infty$, $M$ be as in Theorem \ref{thm:generating q of M norm}, and $X$ be a positive random variable with density
\[
f(x) = \left( 1-\frac 2 p\right)\frac{1}{x^3} M''(x^{-1}) - \frac{1}{px^4}M'''(x^{-1}),\qquad x>0.
\]
Then $X^q$ is integrable.
\end{lemma}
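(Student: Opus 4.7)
My plan is to write $\E X^q = \int_0^\infty x^q\,\dif\Pro_X(x)$ as a limit of $\int_a^b x^q\,\dif\Pro_X(x)$ as $a\to 0^+$ and $b\to\infty$, and then apply Lemma \ref{lem:distribution and integral} with $r=q$. This reduces the question to showing that (i) the four boundary terms $M^{(j)}(b^{-1})b^{q-j}$ and $M^{(j)}(a^{-1})a^{q-j}$ (for $j=0,1,2$) vanish in the respective limits, and (ii) the residual integral $\int_0^\infty M(y)/y^{q+1}\,\dif y$ is finite.

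For the boundary terms at $b\to\infty$, i.e.\ at the point $b^{-1}\to 0^+$, I will invoke Lemma \ref{lem:limits exists and are zero}: under \eqref{eq:limit exists} and \eqref{eq:integral 3} the three limits $\lim_{t\to 0^+}M(t)/t^q = \lim_{t\to 0^+}M'(t)/t^{q-1} = \lim_{t\to 0^+}M''(t)/t^{q-2}=0$ all hold, which is exactly what is needed to make $M(b^{-1})b^q$, $M'(b^{-1})b^{q-1}$, and $M''(b^{-1})b^{q-2}$ tend to $0$. For the boundary terms at $a\to 0^+$, i.e.\ at $a^{-1}\to\infty$, I will use that $M$ is linear on $[M^{-1}(1),\infty)$: there $M'$ is constant and $M''\equiv 0$, so $M''(a^{-1})a^{q-2}=0$ for all sufficiently small $a$, while $M'(a^{-1})a^{q-1}$ and $M(a^{-1})a^q=(\alpha a^{-1}+\beta)a^q$ are both of order $a^{q-1}$ or $a^q$ and vanish since $q>1$.

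After all boundary terms have been discarded, what remains is
\[
\E X^q \;=\; q(q-1)\Bigl(1-\tfrac{q}{p}\Bigr)\int_0^\infty \frac{M(y)}{y^{q+1}}\,\dif y,
\]
so it suffices to bound this last integral. I will split it at $y=M^{-1}(1)$. On $(0,M^{-1}(1)]$ the integral is controlled directly by hypothesis \eqref{eq:integral 3} applied at $s=M^{-1}(1)$, yielding the bound $C\,M(M^{-1}(1))/(M^{-1}(1))^q=C/(M^{-1}(1))^q<\infty$. On $[M^{-1}(1),\infty)$, linearity of $M$ gives $M(y)=\alpha y+\beta$, whence $M(y)/y^{q+1}$ is $O(y^{-q})$ at infinity, and integrability follows from $q>1$.

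There is no serious obstacle here; the argument is essentially bookkeeping. The only delicate point is the justification that all six boundary terms vanish, and that is already packaged by Lemma \ref{lem:limits exists and are zero} together with the linearity of $M$ near infinity; once these are in hand, the convergence of the tail integral is immediate from the two structural assumptions on $M$ (the integral growth condition near $0$ and linearity near $\infty$).
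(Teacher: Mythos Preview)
Your proof is correct and follows exactly the approach the paper intends: the paper's own proof is the one-line remark ``Using Lemma~\ref{lem:distribution and integral}, Lemma~\ref{lem:limits exists and are zero}, condition~\eqref{eq:integral 3}, and the fact that $M$ is linear in the interval $[M^{-1}(1),\infty)$ gives the result,'' and you have correctly unpacked precisely these ingredients. The only slip is cosmetic: you first write ``four boundary terms'' but there are six (as you yourself note two paragraphs later).
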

\begin{proof}
Using Lemma \ref{lem:distribution and integral}, Lemma \ref{lem:limits exists and are zero}, condition \eqref{eq:integral 3}, and the fact that $M$ is linear in the interval $[M^{-1}(1),\infty)$ gives the result.
\end{proof}

\begin{proof}[Proof of Theorem \ref{thm:generating q of M norm} for $p<\infty$]
The lower bound follows using (\ref{generating_q_norm}), the triangle inequality, and Theorem \ref{main}. To prove the upper bound, we first observe that
\begin{align*}
\E_\xi \E_X \big\| \left( a_{ij}\xi_iX_j\right)_{i,j=1}^n \big\|_p & = \E_X \E_\xi \Big( \sum_{i=1}^n |\xi_i|^p \sum_{j=1}^n |a_{ij} X_j|^p \Big)^{1/p} \cr
& \simeq_{p,q} \E_X  \bigg\| \bigg( \Big(\sum_{j=1}^n |a_{ij} X_j|^p\Big)^{1/p}\bigg)_{i=1}^n \bigg\|_q,
\end{align*}
where we used \eqref{generating_q_norm}.
Thus, applying Jensen's inequality,
\begin{align*}
\E_X  \bigg\| \bigg( \Big(\sum_{j=1}^n |a_{ij} X_j|^p\Big)^{1/p}\bigg)_{i=1}^n \bigg\|_q
& \leq  \bigg( \sum_{i=1}^n \E_X \Big(\sum_{j=1}^n \big[|a_{ij}X_j|^q\big]^{p/q} \Big)^{q/p} \bigg)^{1/q}.
\end{align*}
By definition, $X$ is positive and by Lemma \ref{lem:X^q integrable}, $X^q$ is integrable. Hence, Theorem \ref{thm:orlicz_p_norm} applied to the random variables $X_1^q,\ldots, X_n^q$ and parameter $r=p/q$ (instead of parameter $p$) gives
\[
\E_X \big\| \big( |a_{ij}X_j|^q \big)_{j=1}^n\big\|_{ p/ q} \lesssim \big\| \big(|a_{ij}|^q\big)_{j=1}^n \big\|_{M_{X^q, p/ q}},
\]
where
\[
M_{X^q,p/q}(s) = \frac{p}{p-q}\int_0^s\bigg[ \int_{|X|^q \leq 1/t} t^{p/q -1} \left| X \right|^{p} \dif \Pro + \int_{ |X|^q > 1/t}|X|^q \dif \mathbb P \bigg]\dif t
\]
or, as we saw in (\ref{eq:rewritten M by fubini}),
\[
M_{X^q, p/q}(s) = \frac{q\cdot s^{p/q}}{p-q}  \int_0^{s^{-1/q}} x^p \dif \Pro_X(x) + \frac{p\cdot s}{p-q}\int_{s^{-1/q}}^\infty x^q \dif \Pro_X(x) - \Pro(X \geq s^{-1/q}).
\]
Since
\[
\big\| \big(|a_{ij}|^q\big)_{j=1}^n\big\|_{M_{X^q,p/q}} = \big\| \big(|a_{ij}|\big)_{j=1}^n\big\|_{M_{X^q,p/q}\circ t^q}^q,
\]
we obtain
\[
\E_\xi \E_X \big\| \big( a_{ij}\xi_iX_j\big)_{i,j=1}^n \big\|_p \lesssim_{p,q} \big\| \big( \| (a_{ij})_{j=1}^n\|_{M_{X^q,p/q}\circ t^q} \big)_{i=1}^n \big\|_q.
\]
Thus it is left to show that $M_{X^q,p/q}( s^q) \leq c_{p,q} M(s)$ for all $s\geq 0$. We have
\[
M_{X^q,p/q}(s^q) = \frac{q}{p-q} s^p \int_0^{1/s} x^p \dif \Pro_X(x) + \frac{p}{p-q} s^q \int_{1/s}^\infty x^q \dif \Pro_X(x) - \Pro(X \geq s^{-1}).
\]
By Lemma \ref{lem:distribution and integral}, the fact that $M$ is $\mathcal C^3$ and linear in the interval $[M^{-1}(1),\infty)$, by Lemma \ref{lem:limits exists and are zero}, and condition \eqref{eq:integral 3}, we obtain that
\begin{align*}
\frac{q}{p-q}s^p\int_0^{1/s} x^p \dif \Pro_X(x) & = \frac{q}{p(p-q)}s^2M''(s),\cr
\frac{p}{p-q} s^q \int_{1/s}^\infty x^q \dif \Pro_X(x) & = (q-1)M(s)+sM'(s)-\frac{1}{p-q}s^2M''(s) \cr
&\quad+q(q-1)s^q\int_0^s\frac{M(y)}{y^{q+1}\dif y},
\end{align*}
and
\[
\Pro(X \geq s^{-1}) = -M(s)+sM'(s)-\frac{1}{p}s^2M''(s).
\]
Combining these equalities, we find that
\[
M_{X^q,p/q}(s^q) = qM(s)+ q(q-1)s^q\int_0^s\frac{M(y)}{y^{q+1}\dif y}.
\]
By condition \eqref{eq:integral 3}, we can estimate this from above as
\[
M_{X^q,p/q}(s^q) \leq q(1+C(q-1))\cdot M(s).\qedhere
\]

\end{proof}

Let us now continue with the proof of Theorem \ref{cor:reformulation main theorem}, which follows the same lines as the previous proof.

\begin{proof}[Proof of Theorem \ref{cor:reformulation main theorem} for $p<\infty$]

From \eqref{generating_q_norm} we obtain
\[
\E_X\E_\xi \|(a_{ij}\xi_iX_j)_{i,j=1}^n\|_p \simeq_{p,q} \E_X\Bigg\| \Bigg(\Big(\sum_{j=1}^n |a_{ij}X_j|^p\Big)^{1/p} \Bigg)_{i=1}^n \Bigg\|_q.
\]
The lower bound follows from the triangle inequality and Theorem \ref{thm:orlicz_p_norm}.

For the upper bound we use Jensen's inequality and obtain
\[
\E_X\E_\xi \|(a_{ij}\xi_iX_j)_{i,j=1}^n\|_p \lesssim_{p,q} \left(\sum_{i=1}^n \E_X\Big\|\big(|a_{ij}X_j|^q\big)_{j=1}^n \Big\|_{p/q} \right)^{1/q}.
\]
Since, by assumption, $|X|^q$ is integrable, we have by Theorem \ref{thm:orlicz_p_norm} (used with parameter $p/q$) that
\begin{align*}
\E_X\E_\xi \|(a_{ij}\xi_iX_j)_{i,j=1}^n\|_p & \lesssim_{p,q} \left(\sum_{i=1}^n \Big\|\big(|a_{ij}|^q\big)_{j=1}^n \Big\|_{M_{|X|^q,p/q}} \right)^{1/q} \\
& = \left(\sum_{i=1}^n \Big\|\big(a_{ij}\big)_{j=1}^n \Big\|^q_{M_{|X|^q,p/q}\circ t^q} \right)^{1/q}.
\end{align*}
Again, $M_{|X|^q,p/q}\circ t^q$ is the Orlicz function $x\mapsto M_{|X|^q,p/q}(x^q)$. It is left to show that for all $s\geq 0$, $M_{|X|^q,p/q}(s^q) \leq C_{p,q}M_{X,p}(s)$. In order to prove it, we use that (by formula \ref{eq:rewritten M by fubini})
\[
M_{|X|^q,p/q}(s^q) = \frac{q}{p-q} s^p \int_0^{1/s} x^p \dif \Pro_{|X|}(x) + \frac{p}{p-q} s^q \int_{1/s}^\infty x^q \dif \Pro_{|X|}(x) - \Pro(|X| \geq s^{-1}).
\]
By Lemma \ref{lem:density of given X}, we can use Lemma \ref{lem:distribution and integral} to compute the integrals. Taking into account that by the expression obtained in the proof of Lemma \ref{lem:density of given X} for $M_{X,p}''$, we obtain that $\lim_{s\to\infty}M_{X,p}''(s)/ s^{p-2}=0$. Using this in combination with conditions \eqref{eq:limit exists M_X,p} and \eqref{eq:integral 2}, we obtain, as in the proof of Theorem \ref{thm:generating q of M norm}, that
\[
M_{|X|^q,p/q}(s^q) \leq q(1+C(q-1))\cdot M_{X,p}(s),
\]
which finishes the proof.
\end{proof}

%%%%%%%%%%%%%%%%%%%%%%%%%%%
%%%%%%%%%%%%%%%%%%%%%%%%%%%
\section{An application to Banach Space Theory}\label{ApplicationBanach}
%%%%%%%%%%%%%%%%%%%%%%%%%%%
%%%%%%%%%%%%%%%%%%%%%%%%%%%

As already mentioned in the introduction, expressions of the form studied in this work naturally appear in Banach space theory when studying subspaces of the classical Banach space $L_1$. We will use Theorem \ref{thm:generating q of M norm} with $p=2$ to obtain a uniform embedding of the sequence of spaces $\ell^n_q(\ell^n_M)$ ($1<q<2$) into $L_1$, where $M$ is given as in Theorem \ref{thm:generating q of M norm}. In what follows, $r_{ij}$, $i,j=1,\dots,n$ will denote an independent sequence of Rademacher random variables. For any $n\in\N$, the asserted isomorphism is given by
\[
\Psi_n: \ell_{q}^n(\ell_{M}^n) \to L_1 [0,1]^3,\qquad \Psi_n\big((a_{ij})_{i,j=1}^n\big)(t,s,u) = \sum_{i,j=1}^na_{ij}r_{ij}(t)\xi_i(s)X_j(u),
\]
where $r_{ij}$, $\xi_i$, $X_j$, $i,j=1,\dots,n$ are independent collections of i.i.d. random variables defined on the probability space $([0,1],\mathcal B_{[0,1]}, \dif x)$, where $\xi_1$ and $X_1$ have the distributions given by Theorem \ref{thm:generating q of M norm}.
Khintchine's inequality implies
\begin{align*}
\left\| \Psi_n\big((a_{ij})_{i,j=1}^n\big) \right\|_1 & = \int_0^1\int_{0}^1 \int_0^1 \Big| \sum_{i,j=1}^na_{ij}r_{ij}(t) \xi_i(s)X_j (u) \Big| \dif t \dif s \dif u\cr
& \simeq \int_0^1 \int_0^1 \Big(\sum_{i,j=1}^n |a_{ij} \xi_i(s) X_j(u)|^2 \Big)^{1/2} \dif s \dif u \cr 
& = \mathbb E_\xi \mathbb E_X \Big( \sum_{i,j=1}^n |a_{ij}\xi_i X_j|^2\Big)^{1/2}.
\end{align*}
Now, from Theorem \ref{thm:generating q of M norm}, we obtain
\[
\mathbb E_\xi \mathbb E_X \Big( \sum_{i,j=1}^n |a_{ij}\xi_i X_j|^2\Big)^{1/2} \simeq_q \big\| \big( \|(a_{ij})_{i=1}^n \|_{M} \big)_{j=1}^n\big\|_q,
\]
i.e., the sequence of spaces $\left(\ell_{q}^n(\ell_{M}^n)\right)_{n\in\N}$ embeds uniformly into $L_1[0,1]^3$. Since $p=2$, it is necessary that $M'''\leq 0$ for $X$ to have density $f_X$ as given in \eqref{eq:density with M}, which immediately implies that $M$ is $2$-concave (cf., \cite[Lemma 8.1]{ACPP}). To embed these spaces directly into $L_1[0,1]$, we refer the reader to the proof of \cite[Corollary 6.1]{ACPP}. Note that this application is in the same spirit as the main result in \cite{PS}, but with an integral condition instead of pointwise ones (cf., discussion in Section \ref{discussion integral condition}). At this point, it is important to mention the recent paper \cite{Sch} by G. Schechtman on embeddings of spaces $E(F)$ into $L_1$, where he proved that if $E$ and $F$ are spaces with $1$-unconditional bases such that $E$ is $r$-concave and $F$ is $p$-convex for some $1\leq r<p \leq 2$, then the matrix space $E(F)$ embeds into $L_1$. The techniques are different from the ones used in \cite{PS} or here.

%%%%%%%%%%%%%%%%%%%%%%%%%%%%%
%%%%%%%%%%%%%%%%%%%%%%%%%%%%%
\section{The expectation of arbitrary Orlicz norms of random vectors}
%%%%%%%%%%%%%%%%%%%%%%%%%%%%%
%%%%%%%%%%%%%%%%%%%%%%%%%%%%%

One of the key tools we used throughout this paper is Theorem \ref{thm:orlicz_p_norm}. As it turns out, one can rather easily prove a generalization of Theorem \ref{thm:orlicz_p_norm}, therefore providing a simplification of Theorem 1 from \cite{GLSW1}. For the sake of completeness, also in view of the work \cite{ACPP}, we include it here.

\begin{thm}\label{generalization}
Let $N$ be an almost everywhere twice differentiable, normalized Orlicz function such that $N''$ is integrable and $N'(0)=0$. Let $X_1,\dots, X_n$ be independent copies of an integrable random variable $X$. For all $s\geq 0$ define
\[
M(s)= \int_{0}^\infty N(sx) \dif \mathbb P_{|X|}(x).
\]
Then $M$ is a normalized Orlicz function and for all $a=(a_i)_{i=1}^n\in\R^n$,
\[
c_1\|a\|_M \leq \mathbb E \|(a_iX_i)_{i=1}^n\|_N \leq c_2 \|a\|_M,
\]
where $c_1,c_2$ are absolute positive constants.
\end{thm}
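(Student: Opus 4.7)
The plan is to first check that $M$ is a normalized Orlicz function and then deduce the two-sided equivalence by a single clean reduction to the already established max-version, Theorem~\ref{thm:orlicz_carsten1}. For the first part, convexity of $M$ is inherited from $N$ by linearity of the integral, $M(0)=0$ follows from $N(0)=0$, and since $N''$ is integrable the derivative $N'$ is bounded, so $M'(s)=\int_0^\infty xN'(sx)\dif P_{|X|}(x)$ is well defined and vanishes at $s=0$ by $N'(0)=0$ together with integrability of $X$. For normalization, I would use Fubini and the substitution $u=sx$ to write
\[
\int_0^\infty s \dif M'(s)
=\int_0^\infty\int_0^\infty s x^2 N''(sx)\dif P_{|X|}(x)\dif s
=\int_0^\infty u N''(u)\dif u = 1,
\]
using that $N$ is normalized and $P_{|X|}$ is a probability measure.

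For the norm equivalence, the main idea is a two-stage randomization that realizes $\|\cdot\|_N$ via Proposition~\ref{PRO_inverse_maximum} and then absorbs it into a single maximum. Let $Y_1,\dots,Y_n$ be independent copies of a random variable $Y$ with $\Pro(Y\ge t)=\int_{[0,1/t]}s\dif N'(s)$, chosen independently of $(X_i)$. Integrability of $N''$ makes $\int_0^\infty \dif N'<\infty$, so $Y$ is integrable and Proposition~\ref{PRO_inverse_maximum} applies with the Orlicz function $N$. Conditioning on the $X_i$, it gives pointwise in $\omega$
\[
c_1\|(a_iX_i(\omega))_{i=1}^n\|_N \leq \E_Y\max_{1\leq i\leq n}|a_iX_i(\omega)Y_i| \leq c_2\|(a_iX_i(\omega))_{i=1}^n\|_N,
\]
so that taking expectation in $X$ and using Fubini reduces the theorem to showing $\E\max_i|a_iZ_i|\simeq\|a\|_M$ for the i.i.d.\ copies $Z_i=X_iY_i$ of the integrable variable $Z=XY$. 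This is exactly Theorem~\ref{thm:orlicz_carsten1} applied to $Z$, provided one knows $M_Z=M$.

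The final and most delicate step is therefore the identity $M_Z=M$. Fubini on the defining integral together with the substitution $u=t|X|$ inside the outer expectation gives
\[
M_Z(s)=\int_0^s\E\bigl[|XY|\mathbbm{1}_{|XY|\geq 1/t}\bigr]\dif t=\E\bigl[M_Y(s|X|)\bigr],
\]
so the problem reduces to verifying $M_Y=N$. From the distribution of $Y$, the substitution $u=1/x$ yields $\int_{1/s}^\infty x\dif P_Y(x)=\int_0^s \dif N'(u)=N'(s)$, while integration by parts gives $\Pro(Y\geq 1/s)=\int_0^s u\dif N'(u)=sN'(s)-N(s)$. Plugging these into formula~\eqref{eqn:m1} produces $M_Y(s)=sN'(s)-(sN'(s)-N(s))=N(s)$. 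This exhibits Proposition~\ref{PRO_inverse_maximum} as an exact inverse of Theorem~\ref{thm:orlicz_carsten1}, after which the proof assembles itself by composing the two results through the product variable $XY$; no concentration or variance estimate is needed.
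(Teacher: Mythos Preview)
Your proof is correct and follows the same two-stage strategy as the paper: apply Proposition~\ref{PRO_inverse_maximum} to $N$ to produce the auxiliary variable $Y$, then apply Theorem~\ref{thm:orlicz_carsten1} to the product $XY$, and finally identify the resulting Orlicz function with $M$. The only organizational difference is that you factor the identification through the intermediate identity $M_Y=N$ (making explicit that Proposition~\ref{PRO_inverse_maximum} inverts Theorem~\ref{thm:orlicz_carsten1}) and then use $M_Z(s)=\E[M_Y(s|X|)]$, whereas the paper computes $N_{XY}=M$ in one pass via the density $f_Y(y)=y^{-3}N''(y^{-1})$; both routes amount to the same Fubini and change-of-variable calculation.
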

%\begin{thm}\label{generalization}
%Let $N$ be a twice differentiable, normalized Orlicz function such that $N''$ is integrable and $N'(0)=0$. Let $X_1,\dots, X_n$ be independent copies of an integrable random variable $X$. For all $s\geq 0$ define
%\[
%M(s)= \int_{0}^\infty N(sx) \dif \mathbb P_{|X|}(x).
%\]
%Then $M$ is an Orlicz function and for all $a=(a_i)_{i=1}^n\in\R^n$,
%\[
%c_1\|a\|_M \leq \mathbb E \|(a_iX_i)_{i=1}^n\|_N \leq c_2 \|a\|_M,
%\]
%where $c_1,c_2$ are absolute positive constants.
%\end{thm}
\begin{proof}
First of all, applying Proposition \ref{PRO_inverse_maximum} to the function $N$, we find independent copies $Y_1,\dots, Y_n$ of a positive integrable random variable $Y$ (which is integrable because $N''$ is), independent of the $X_i$'s, with density
\[
f_Y(y) = y^{-3}N''(y^{-1}),\qquad y>0,
\]
satisfying
\[
\mathbb E_X \mathbb E_Y \max_{1\leq i \leq n} \left| a_i X_i Y_i\right| \simeq \mathbb E_X \| (a_i X_i)_{i=1}^n \|_N.
\]
On the other hand, Theorem \ref{thm:orlicz_carsten1} applied to the sequence of random variables \\$X_1\cdot Y_1,\dots,X_n\cdot Y_n$ (which are integrable due to independence of $X$ and $Y$ gives us a normalized Orlicz function $N_{XY}$, satisfying
\[
\mathbb E_X \mathbb E_Y \max_{1\leq i \leq n} \left| a_i X_i Y_i\right| \simeq \|a\|_{N_{XY}}.
\]
In order to show $N_{XY} = M$, we first observe that
\begin{align*}
N_{XY}(s) & = \int_{0}^s \int_{|XY| \geq t^{-1}} |XY| \dif \mathbb P \, dt \cr
& = \int_{0}^s \int_{|xy| \geq t^{-1}} |xy| \dif \mathbb P_{|X|}(x) \dif \mathbb P_{Y}(y) \dif t \cr
& = \int_{0}^s \int_0^\infty x \int_{(tx)^{-1}}^\infty y  \dif\mathbb P_{Y}(y) \dif \mathbb P_{|X|}(x) \dif t.
\end{align*}
Using the form of the density, a change of variable, and Fubini's theorem, we obtain
\[
N_{XY}(s) = \int_0^s \int_0^\infty xN'(tx) \dif\mathbb P_{|X|}(x)\, \dif t = \int_0^\infty N(sx)\dif\mathbb P_{|X|}(x) = M(s).\qedhere
\]
\end{proof}

\begin{rmk}
Observe that in the case where $N$ is the continuously differentiable, normalized Orlicz function
\[
N(t)=\begin{cases}
t^p, &\text{if }t\leq (p-1)^{-1/p}, \\
p(p-1)^{1/p-1}\cdot t -1, &\text{if }t>(p-1)^{-1/p},
\end{cases}
\]
we have that $\|\cdot\|_N$ is equivalent to $\|\cdot\|_p$, and Theorem \ref{generalization} recovers Theorem \ref{thm:orlicz_p_norm}.
\end{rmk}

\proof[Acknowledgements]
Part of this work was done while J. Prochno visited D. Alonso-Guti\'errez at Universtitat Jaume I in Castell\'on. We would like to thank the department for providing such good environment and working conditions. We are grateful to the anonymous referee for reading this paper so carefully, pointing out mistakes, and providing us with many useful comments that improved the quality of this work.

\bibliographystyle{plain}
\bibliography{tensor_products_random_vectors}

\end{document}